\begin{document}
\newcommand {\emptycomment}[1]{} %to remove paragraphs

\baselineskip=14pt
\newcommand{\nc}{\newcommand}
\newcommand{\delete}[1]{}
\nc{\mfootnote}[1]{\footnote{#1}} % Use this to show footnotes
\nc{\todo}[1]{\tred{To do:} #1}

%\delete{
\nc{\mlabel}[1]{\label{#1}}  % Use this to suppress names
\nc{\mcite}[1]{\cite{#1}}  % Use this to suppress names
\nc{\mref}[1]{\ref{#1}}  % Use this to suppress names
\nc{\mbibitem}[1]{\bibitem{#1}} % Use this to show number
%}

\delete{
\nc{\mlabel}[1]{\label{#1}  % Use the next two lines to show names
{\hfill \hspace{1cm}{\bf{{\ }\hfill(#1)}}}}
\nc{\mcite}[1]{\cite{#1}{{\bf{{\ }(#1)}}}}  % Use this lines to show names
\nc{\mref}[1]{\ref{#1}{{\bf{{\ }(#1)}}}}  % Use this lines to show names
\nc{\mbibitem}[1]{\bibitem[\bf #1]{#1}} % Use this to show name
}

%%%%%%%%%%%%%%%%%%%%%%%% Statements
\newtheorem{thm}{Theorem}[section]
\newtheorem{lem}[thm]{Lemma}
\newtheorem{cor}[thm]{Corollary}
\newtheorem{pro}[thm]{Proposition}
\newtheorem{ex}[thm]{Example}
\newtheorem{rmk}[thm]{Remark}
\newtheorem{defi}[thm]{Definition}
\newtheorem{pdef}[thm]{Proposition-Definition}
\newtheorem{condition}[thm]{Condition}

\renewcommand{\labelenumi}{{\rm(\alph{enumi})}}
\renewcommand{\theenumi}{\alph{enumi}}

\nc{\tred}[1]{\textcolor{red}{#1}}
\nc{\tblue}[1]{\textcolor{blue}{#1}}
\nc{\tgreen}[1]{\textcolor{green}{#1}}
\nc{\tpurple}[1]{\textcolor{purple}{#1}}
\nc{\btred}[1]{\textcolor{red}{\bf #1}}
\nc{\btblue}[1]{\textcolor{blue}{\bf #1}}
\nc{\btgreen}[1]{\textcolor{green}{\bf #1}}
\nc{\btpurple}[1]{\textcolor{purple}{\bf #1}}

\nc{\YLM}[1]{\textcolor{red}{Lamei:#1}}
\nc{\Hyy}[1]{\textcolor{blue}{Yanyong: #1}}
%\nc{\lit}[2]{\textcolor{blue}{#1}{ \textcolor{purple}{(#2)}}}

%%%%%%%%%%%%%% Matrix symbols.

\nc{\twovec}[2]{\left(\begin{array}{c} #1 \\ #2\end{array} \right )}
\nc{\threevec}[3]{\left(\begin{array}{c} #1 \\ #2 \\ #3 \end{array}\right )}
\nc{\twomatrix}[4]{\left(\begin{array}{cc} #1 & #2\\ #3 & #4 \end{array} \right)}
\nc{\threematrix}[9]{{\left(\begin{matrix} #1 & #2 & #3\\ #4 & #5 & #6 \\ #7 & #8 & #9 \end{matrix} \right)}}
\nc{\twodet}[4]{\left|\begin{array}{cc} #1 & #2\\ #3 & #4 \end{array} \right|}

\nc{\rk}{\mathrm{r}}
\newcommand{\g}{\mathfrak g}
\newcommand{\h}{\mathfrak h}
\newcommand{\pf}{\noindent{$Proof$.}\ }
\newcommand{\frkg}{\mathfrak g}
\newcommand{\frkh}{\mathfrak h}
\newcommand{\Id}{\rm{Id}}
\newcommand{\gl}{\mathfrak {gl}}
\newcommand{\ad}{\mathrm{ad}}
\newcommand{\add}{\frka\frkd}
\newcommand{\frka}{\mathfrak a}
\newcommand{\frkb}{\mathfrak b}
\newcommand{\frkc}{\mathfrak c}
\newcommand{\frkd}{\mathfrak d}
\newcommand {\comment}[1]{{\marginpar{*}\scriptsize\textbf{Comments:} #1}}
%%%%%%%%%%%%%%%%%%%%%%% symbols

\nc{\gensp}{V} % space of generators
\nc{\relsp}{\Lambda} %space of relations
\nc{\leafsp}{X}    %decoration space for leaves
\nc{\treesp}{\overline{\calt}} % space of labeled trees

\nc{\vin}{{\mathrm Vin}}    %decoration set of indices
\nc{\lin}{{\mathrm Lin}}    %decoration set of leaves

\nc{\gop}{{\,\omega\,}}     % generic binary operation
\nc{\gopb}{{\,\nu\,}}
\nc{\svec}[2]{{\tiny\left(\begin{matrix}#1\\
#2\end{matrix}\right)\,}}  % column vector
\nc{\ssvec}[2]{{\tiny\left(\begin{matrix}#1\\
#2\end{matrix}\right)\,}} % subscript column vector

\nc{\typeI}{local cocycle $3$-Lie bialgebra\xspace}
\nc{\typeIs}{local cocycle $3$-Lie bialgebras\xspace}
\nc{\typeII}{double construction $3$-Lie bialgebra\xspace}
\nc{\typeIIs}{double construction $3$-Lie bialgebras\xspace}

\nc{\bia}{{$\mathcal{P}$-bimodule ${\bf k}$-algebra}\xspace}
\nc{\bias}{{$\mathcal{P}$-bimodule ${\bf k}$-algebras}\xspace}

\nc{\rmi}{{\mathrm{I}}}
\nc{\rmii}{{\mathrm{II}}}
\nc{\rmiii}{{\mathrm{III}}}
\nc{\pr}{{\mathrm{pr}}}
\newcommand{\huaA}{\mathcal{A}}

\nc{\pll}{\beta}
\nc{\plc}{\epsilon}

\nc{\ass}{{\mathit{Ass}}}
\nc{\lie}{{\mathit{Lie}}}
\nc{\comm}{{\mathit{Comm}}}
\nc{\dend}{{\mathit{Dend}}}
\nc{\zinb}{{\mathit{Zinb}}}
\nc{\tdend}{{\mathit{TDend}}}
\nc{\prelie}{{\mathit{preLie}}}
\nc{\postlie}{{\mathit{PostLie}}}
\nc{\quado}{{\mathit{Quad}}}
\nc{\octo}{{\mathit{Octo}}}
\nc{\ldend}{{\mathit{ldend}}}
\nc{\lquad}{{\mathit{LQuad}}}

 \nc{\adec}{\check{;}} \nc{\aop}{\alpha}
\nc{\dftimes}{\widetilde{\otimes}} \nc{\dfl}{\succ} \nc{\dfr}{\prec}
\nc{\dfc}{\circ} \nc{\dfb}{\bullet} \nc{\dft}{\star}
\nc{\dfcf}{{\mathbf k}} \nc{\apr}{\ast} \nc{\spr}{\cdot}
\nc{\twopr}{\circ} \nc{\tspr}{\star} \nc{\sempr}{\ast}
\nc{\disp}[1]{\displaystyle{#1}}
\nc{\bin}[2]{ (_{\stackrel{\scs{#1}}{\scs{#2}}})}  %binomial coeff
\nc{\binc}[2]{ \left (\!\! \begin{array}{c} \scs{#1}\\
    \scs{#2} \end{array}\!\! \right )}  %binomial coeff
\nc{\bincc}[2]{  \left ( {\scs{#1} \atop
    \vspace{-.5cm}\scs{#2}} \right )}  %binomial coeff
\nc{\sarray}[2]{\begin{array}{c}#1 \vspace{.1cm}\\ \hline
    \vspace{-.35cm} \\ #2 \end{array}}
\nc{\bs}{\bar{S}} \nc{\dcup}{\stackrel{\bullet}{\cup}}
\nc{\dbigcup}{\stackrel{\bullet}{\bigcup}} \nc{\etree}{\big |}
\nc{\la}{\longrightarrow} \nc{\fe}{\'{e}} \nc{\rar}{\rightarrow}
\nc{\dar}{\downarrow} \nc{\dap}[1]{\downarrow
\rlap{$\scriptstyle{#1}$}} \nc{\uap}[1]{\uparrow
\rlap{$\scriptstyle{#1}$}} \nc{\defeq}{\stackrel{\rm def}{=}}
\nc{\dis}[1]{\displaystyle{#1}} \nc{\dotcup}{\,
\displaystyle{\bigcup^\bullet}\ } \nc{\sdotcup}{\tiny{
\displaystyle{\bigcup^\bullet}\ }} \nc{\hcm}{\ \hat{,}\ }
\nc{\hcirc}{\hat{\circ}} \nc{\hts}{\hat{\shpr}}
\nc{\lts}{\stackrel{\leftarrow}{\shpr}}
\nc{\rts}{\stackrel{\rightarrow}{\shpr}} \nc{\lleft}{[}
\nc{\lright}{]} \nc{\uni}[1]{\tilde{#1}} \nc{\wor}[1]{\check{#1}}
\nc{\free}[1]{\bar{#1}} \nc{\den}[1]{\check{#1}} \nc{\lrpa}{\wr}
\nc{\curlyl}{\left \{ \begin{array}{c} {} \\ {} \end{array}
    \right .  \!\!\!\!\!\!\!}
\nc{\curlyr}{ \!\!\!\!\!\!\!
    \left . \begin{array}{c} {} \\ {} \end{array}
    \right \} }
\nc{\leaf}{\ell}       % number of leafs
\nc{\longmid}{\left | \begin{array}{c} {} \\ {} \end{array}
    \right . \!\!\!\!\!\!\!}
\nc{\ot}{\otimes} \nc{\sot}{{\scriptstyle{\ot}}}
\nc{\otm}{\overline{\ot}}
\nc{\ora}[1]{\stackrel{#1}{\rar}}
\nc{\ola}[1]{\stackrel{#1}{\la}}%${\Bbb Z}$
\nc{\pltree}{\calt^\pl}
\nc{\epltree}{\calt^{\pl,\NC}}
\nc{\rbpltree}{\calt^r}
\nc{\scs}[1]{\scriptstyle{#1}} \nc{\mrm}[1]{{\rm #1}}
\nc{\dirlim}{\displaystyle{\lim_{\longrightarrow}}\,}
\nc{\invlim}{\displaystyle{\lim_{\longleftarrow}}\,}
\nc{\mvp}{\vspace{0.5cm}} \nc{\svp}{\vspace{2cm}}
\nc{\vp}{\vspace{8cm}} \nc{\proofbegin}{\noindent{\bf Proof: }}
%\nc{\proofbegin}{\begin{proof}} % AMS command
\nc{\proofend}{$\blacksquare$ \vspace{0.5cm}}
%\nc{\proofend}{\end{proof}} %AMS command
\nc{\freerbpl}{{F^{\mathrm RBPL}}}
\nc{\sha}{{\mbox{\cyr X}}}  %used to be \cyr
\nc{\ncsha}{{\mbox{\cyr X}^{\mathrm NC}}} \nc{\ncshao}{{\mbox{\cyr
X}^{\mathrm NC,\,0}}}
\nc{\shpr}{\diamond}    %Shuffle product
\nc{\shprm}{\overline{\diamond}}    %Shuffle product
\nc{\shpro}{\diamond^0}    %Shuffle product
\nc{\shprr}{\diamond^r}     %product on controlled trees
\nc{\shpra}{\overline{\diamond}^r}
\nc{\shpru}{\check{\diamond}} \nc{\catpr}{\diamond_l}
\nc{\rcatpr}{\diamond_r} \nc{\lapr}{\diamond_a}
\nc{\sqcupm}{\ot}
\nc{\lepr}{\diamond_e} \nc{\vep}{\varepsilon} \nc{\labs}{\mid\!}
\nc{\rabs}{\!\mid} \nc{\hsha}{\widehat{\sha}}
\nc{\lsha}{\stackrel{\leftarrow}{\sha}}
\nc{\rsha}{\stackrel{\rightarrow}{\sha}} \nc{\lc}{\lfloor}
\nc{\rc}{\rfloor}
\nc{\tpr}{\sqcup}
\nc{\nctpr}{\vee}
\nc{\plpr}{\star}
\nc{\rbplpr}{\bar{\plpr}}
\nc{\sqmon}[1]{\langle #1\rangle}
\nc{\forest}{\calf}
\nc{\altx}{\Lambda_X} \nc{\vecT}{\vec{T}} \nc{\onetree}{\bullet}
\nc{\Ao}{\check{A}}
\nc{\seta}{\underline{\Ao}}
\nc{\deltaa}{\overline{\delta}}
\nc{\trho}{\tilde{\rho}}

\nc{\rpr}{\circ}
%\nc{\apr}{\cdot}
\nc{\dpr}{{\tiny\diamond}}
\nc{\rprpm}{{\rpr}}

%%%%%%%%%%%%%%%%%%%%% roman fonts, in alphabetic order
\nc{\mmbox}[1]{\mbox{\ #1\ }} \nc{\ann}{\mrm{ann}}
\nc{\Aut}{\mrm{Aut}} \nc{\can}{\mrm{can}}
\nc{\twoalg}{{two-sided algebra}\xspace}
\nc{\colim}{\mrm{colim}}
\nc{\Cont}{\mrm{Cont}} \nc{\rchar}{\mrm{char}}
\nc{\cok}{\mrm{coker}} \nc{\dtf}{{R-{\rm tf}}} \nc{\dtor}{{R-{\rm
tor}}}
\renewcommand{\det}{\mrm{det}}
\nc{\depth}{{\mrm d}}
\nc{\Div}{{\mrm Div}} \nc{\End}{\mrm{End}} \nc{\Ext}{\mrm{Ext}}
\nc{\Fil}{\mrm{Fil}} \nc{\Frob}{\mrm{Frob}} \nc{\Gal}{\mrm{Gal}}
\nc{\GL}{\mrm{GL}} \nc{\Hom}{\mrm{Hom}} \nc{\hsr}{\mrm{H}}
\nc{\hpol}{\mrm{HP}} \nc{\id}{\mrm{id}} \nc{\im}{\mrm{im}}
\nc{\incl}{\mrm{incl}} \nc{\length}{\mrm{length}}
\nc{\LR}{\mrm{LR}} \nc{\mchar}{\rm char} \nc{\NC}{\mrm{NC}}
\nc{\mpart}{\mrm{part}} \nc{\pl}{\mrm{PL}}
\nc{\ql}{{\QQ_\ell}} \nc{\qp}{{\QQ_p}}
\nc{\rank}{\mrm{rank}} \nc{\rba}{\rm{RBA }} \nc{\rbas}{\rm{RBAs }}
\nc{\rbpl}{\mrm{RBPL}}
\nc{\rbw}{\rm{RBW }} \nc{\rbws}{\rm{RBWs }} \nc{\rcot}{\mrm{cot}}
\nc{\rest}{\rm{controlled}\xspace}
\nc{\rdef}{\mrm{def}} \nc{\rdiv}{{\rm div}} \nc{\rtf}{{\rm tf}}
\nc{\rtor}{{\rm tor}} \nc{\res}{\mrm{res}} \nc{\SL}{\mrm{SL}}
\nc{\Spec}{\mrm{Spec}} \nc{\tor}{\mrm{tor}} \nc{\Tr}{\mrm{Tr}}
\nc{\mtr}{\mrm{sk}}

%%%%%%%%%%%%%%%%%% bold face
\nc{\ab}{\mathbf{Ab}} \nc{\Alg}{\mathbf{Alg}}
\nc{\Algo}{\mathbf{Alg}^0} \nc{\Bax}{\mathbf{Bax}}
\nc{\Baxo}{\mathbf{Bax}^0} \nc{\RB}{\mathbf{RB}}
\nc{\RBo}{\mathbf{RB}^0} \nc{\BRB}{\mathbf{RB}}
\nc{\Dend}{\mathbf{DD}} \nc{\bfk}{{\bf k}} \nc{\bfone}{{\bf 1}}
\nc{\base}[1]{{a_{#1}}} \nc{\detail}{\marginpar{\bf More detail}
    \noindent{\bf Need more detail!}
    \svp}
\nc{\Diff}{\mathbf{Diff}} \nc{\gap}{\marginpar{\bf
Incomplete}\noindent{\bf Incomplete!!}
    \svp}
\nc{\FMod}{\mathbf{FMod}} \nc{\mset}{\mathbf{MSet}}
\nc{\rb}{\mathrm{RB}} \nc{\Int}{\mathbf{Int}}
\nc{\Mon}{\mathbf{Mon}}
%\nc{\remark}{\noindent{\bf Remark: }}
\nc{\remarks}{\noindent{\bf Remarks: }}
\nc{\OS}{\mathbf{OS}} %free operated semigroup
\nc{\Rep}{\mathbf{Rep}}
\nc{\Rings}{\mathbf{Rings}} \nc{\Sets}{\mathbf{Sets}}
\nc{\DT}{\mathbf{DT}}

%%%%%%%%%%%%%%%%%%%Bbb fonts
\nc{\BA}{{\mathbb A}} \nc{\CC}{{\mathbb C}} \nc{\DD}{{\mathbb D}}
\nc{\EE}{{\mathbb E}} \nc{\FF}{{\mathbb F}} \nc{\GG}{{\mathbb G}}
\nc{\HH}{{\mathbb H}} \nc{\LL}{{\mathbb L}} \nc{\NN}{{\mathbb N}}
\nc{\QQ}{{\mathbb Q}} \nc{\RR}{{\mathbb R}} \nc{\BS}{{\mathbb{S}}} \nc{\TT}{{\mathbb T}}
\nc{\VV}{{\mathbb V}} \nc{\ZZ}{{\mathbb Z}}

%%%%%%%%%%%%%%%%%%% cal fonts

\nc{\calao}{{\mathcal A}} \nc{\cala}{{\mathcal A}}
\nc{\calc}{{\mathcal C}} \nc{\cald}{{\mathcal D}}
\nc{\cale}{{\mathcal E}} \nc{\calf}{{\mathcal F}}
\nc{\calfr}{{{\mathcal F}^{\,r}}} \nc{\calfo}{{\mathcal F}^0}
\nc{\calfro}{{\mathcal F}^{\,r,0}} \nc{\oF}{\overline{F}}
\nc{\calg}{{\mathcal G}} \nc{\calh}{{\mathcal H}}
\nc{\cali}{{\mathcal I}} \nc{\calj}{{\mathcal J}}
\nc{\call}{{\mathcal L}} \nc{\calm}{{\mathcal M}}
\nc{\caln}{{\mathcal N}} \nc{\calo}{{\mathcal O}}
\nc{\calp}{{\mathcal P}} \nc{\calq}{{\mathcal Q}} \nc{\calr}{{\mathcal R}}
\nc{\calt}{{\mathcal T}} \nc{\caltr}{{\mathcal T}^{\,r}}
\nc{\calu}{{\mathcal U}} \nc{\calv}{{\mathcal V}}
\nc{\calw}{{\mathcal W}} \nc{\calx}{{\mathcal X}}
\nc{\CA}{\mathcal{A}}

%%%%%%%%%%%%%%%%%%  frak fonts
\nc{\fraka}{{\mathfrak a}} \nc{\frakB}{{\mathfrak B}}
\nc{\frakb}{{\mathfrak b}} \nc{\frakd}{{\mathfrak d}}
\nc{\oD}{\overline{D}}
\nc{\frakF}{{\mathfrak F}} \nc{\frakg}{{\mathfrak g}}
\nc{\frakm}{{\mathfrak m}} \nc{\frakM}{{\mathfrak M}}
\nc{\frakMo}{{\mathfrak M}^0} \nc{\frakp}{{\mathfrak p}}
\nc{\frakS}{{\mathfrak S}} \nc{\frakSo}{{\mathfrak S}^0}
\nc{\fraks}{{\mathfrak s}} \nc{\os}{\overline{\fraks}}
\nc{\frakT}{{\mathfrak T}}
\nc{\oT}{\overline{T}}
%\nc{\frakx}{{\mathfrak x}}
\nc{\frakX}{{\mathfrak X}} \nc{\frakXo}{{\mathfrak X}^0}
\nc{\frakx}{{\mathbf x}}
%\nc{\frakTxo}{{\frakTx}^0}
\nc{\frakTx}{\frakT}      %All rooted trees, correspond to \ncsha(X)
\nc{\frakTa}{\frakT^a}        % rooted trees for \ncsha(A)
\nc{\frakTxo}{\frakTx^0}   % rooted trees for \ncshao(X)
\nc{\caltao}{\calt^{a,0}}   % rooted trees for \ncshao(A)
\nc{\ox}{\overline{\frakx}} \nc{\fraky}{{\mathfrak y}}
\nc{\frakz}{{\mathfrak z}} \nc{\oX}{\overline{X}}

\font\cyr=wncyr10

\nc{\redtext}[1]{\textcolor{red}{#1}}
%\nc{\li}[1]{\textcolor{red}{#1}}

%%%%%%%%%%%%%%%%%%%%%%%%%%%%%%%%%%%%%%%%%%%%%%%%%%%%%%%%%%%%%%%%%%

%\begin{document}
\title{Unified products of Leibniz conformal algebras}

\author{Yanyong Hong}
\address{Department of Mathematics, Hangzhou Normal University,
Hangzhou, 311121, China}
\email{hongyanyong2008@yahoo.com}

\author{Lamei Yuan}
\address{Department of Mathematics, Harbin Institute of Technology, Harbin, 150001, P.R.China}
\email{lmyuan@hit.edu.cn}

\subjclass[2010]{}
\keywords{Leibniz conformal algebra, unified product, crossed product, bicrossed product}

\begin{abstract}
The aim of this paper is to provide an answer to the $\mathbb{C}[\partial]$-split extending structures problem for Leibniz conformal algebras, which asks that how to describe all Leibniz conformal algebra structures on $E=R\oplus Q$ up to an isomorphism such that $R$ is a Leibniz conformal subalgebra.
%and $Q$ is a $\mathbb{C}[\partial]$-module.
For this purpose, an unified product of Leibniz conformal algebras is introduced. Using this tool, two cohomological type objects are constructed to classify all such extending structures up to an isomorphism. Then this general theory is applied to the special case when $R$ is a free $\mathbb{C}[\partial]$-module and $Q$ is a free $\mathbb{C}[\partial]$-module of rank one. Finally, the twisted product, crossed product and bicrossed product between two Leibniz conformal algebras are introduced as special cases of the unified product, and some examples are given.
%In this paper, the definition of unified product of Leibniz conformal algebras is introduced, which is a very general product including twisted product, crossed product, bicrossed product and so on. Applying the unified product, we give a cohomological type object to characterize the $\mathbb{C}[\partial]$-split extending structures problem, which asks that how to describe all Leibniz conformal algebra structures on $E=R\oplus Q$ such that $R$ is a Leibniz conformal subalgebra up to isomorphsim, where $Q$ is a $\mathbb{C}[\partial]$-module. Using this general theory, some special cases are investigated, for example, when $R$ is a free $\mathbb{C}[\partial]$-module and $Q$ is a free $\mathbb{C}[\partial]$-module of rank one. Moreover, we study crossed product and bicrossed product in detail and some examples are given.
\end{abstract}

\maketitle
{\footnote{The second author is the corresponding author.}}
\section{Introduction}
As non-commutative analogues of Lie algebras, Leibniz algebras were first introduced by Bloh in \cite{B} and reintroduced later by Loday in \cite{L} during their study on periodicity phenomena in algebraic $K$-theory. The name of left (resp. right) Leibniz algebra comes from that the left (resp. right) multiplication is a derivation. Conformal algebras were introduced in \cite{K1} in order to give an
axiomatic description of the operator product expansion (or rather
its Fourier transform) of chiral fields in conformal field theory.
Leibniz conformal algebras were introduced in \cite{BKV} (see also \cite{DK1}) and their cohomology theory was investigated in \cite{BKV,Z}. It was shown in \cite{BK} that Leibniz conformal algebras are closely related to field algebras, which are non-commutative generalizations of vertex algebras. The notion of a Leibniz pseudoalgebra was introduced and studied in \cite{Wu}. As a special class of Leibniz conformal algebras, quadratic Leibniz conformal algebras were studied
%and their central extensions were also investigated
in \cite{ZH}. Recently, all torsion-free Leibniz conformal algebras of rank $2$ were classified in \cite{Wu1}.

In this paper, we will consider a general extension theory of Leibniz conformal algebras. Let $R$ be a Leibniz conformal algebra and $Q$ a $\mathbb{C}[\partial]$-module. It was stated in \cite{BKV} that when $R$ is  abelian and $Q$ is a Leibniz conformal algebra, all $\mathbb{C}[\partial]$-split central extensions of $Q$ by $R$ (up to equivalence) can be characterized by the second cohomology group $H^2(Q,R)$. We will generalize this problem to a more general case:

{\bf Set $E=R\oplus Q$, where the direct sum is the sum of $\mathbb{C}[\partial]$-modules. Describe and classify all Leibniz conformal algebra structures on $E$ such that $R$ is a subalgebra of $E$ up to isomorphisms whose restrictions on $R$ are the identity maps.}

The above problem is called the {\bf $\mathbb{C}[\partial]$-split extending structures problem}. It is a very general question including the following interesting algebra problem, which appears in the homological algebra theory:

\emph{For two Leibniz conformal algebras $R$ and $Q$, describe and classify (up to equivalence)
all $\mathbb{C}[\partial]$-split exact sequences of Leibniz conformal algebras as follows:
\begin{eqnarray}\label{en1}
\xymatrix@C=0.5cm{
  0 \ar[r] & R \ar[rr]^{i} && E\ar[rr]^{\pi} && Q\ar[r] & 0 }.
\end{eqnarray}}
This problem is called the {\bf $\mathbb{C}[\partial]$-split extension problem}. It is a direct generalization of the central extension problem.

The $\mathbb{C}[\partial]$-split extending structures problem for Leibniz algebras, Lie conformal algebras and associative conformal algebras were solved in \cite{AM, HS, H}, respectively. The purpose of the present paper is to do the same for Leibniz conformal algebras. In this paper, we will provide an answer to the $\mathbb{C}[\partial]$-split extending structures problem as follows: first we will describe all Leibniz conformal algebra structures on $E$ which contains $R$ as a subalgebra by defining a unified product of  $R$ and a $\mathbb{C}[\partial]$-module $Q$; then we will classify them up to a Leibniz conformal algebra isomorphism $\varphi: E\rightarrow E$ such that $\varphi$ acts as the identity on $R$. In the case when $R=0$, the $\mathbb{C}[\partial]$-split extending structures problem is equivalent to classifying all Leibniz conformal algebras of arbitrary rank. It is very difficult, even when the rank is 3. For this reason, we will always assume that $R\neq 0$.

%(In this paper, we introduce the definition of unified product of a Leibniz conformal algebra $R$ and a $\mathbb{C}[\partial]$-module $Q$ and construct a cohomological type object to describe all Leibniz conformal algebra structures on $E$ up to isomorphism by using the tool of unified product. With this general theory, we also study the case when  $R$ is a free $\mathbb{C}[\partial]$-module and $Q$ is a free $\mathbb{C}[\partial]$-module of rank one. Moreover, as special cases of unified product, we also introduce twisted product, crossed product and bicrossed product. It is shown that crossed product can be used to give a theoretical answer to the $\mathbb{C}[\partial]$-split extensions problem. Some interesting results in these special cases are also obtained. Note that when $R=0$, the $\mathbb{C}[\partial]$-split extending structures problem is equivalent to classifying all Leibniz conformal algebras of arbitrary ranks. Obviously, it is very hard, even when the rank is 3. Therefore, we always assume $R\neq 0$.

 %Such a problem is a basic algebra problem for Leibniz conformal algebras and it is important for studying Leibniz conformal algebras. Therefore, it is meaningful and the obtained results will be useful for investigating the structure theory of Leibniz conformal algebras.

The rest of the paper is organized as follows. In Section 2, the definitions of Lie conformal algebras and Leibniz conformal algebras are recalled. Also, some necessary concepts including conformal linear map, conformal bilinear map, module, conformal derivation and conformal anti-derivation of a Leibniz conformal algebra are reviewed. %which are all related with the $\mathbb{C}[\partial]$-split extending structures problem.
In Section 3, we introduce the notion of a unified product of Leibniz conformal algebras. By using this tool, we  construct a cohomological type object to give a theoretical answer to the $\mathbb{C}[\partial]$-split extending structures problem. In Section 4, we apply the general theory developed in Section 3 to the special case when $R$ is a free $\mathbb{C}[\partial]$-module and $Q$ is a free $\mathbb{C}[\partial]$-module of rank 1.
In Section 5, some particular cases of the unified product including twisted product, crossed product and bicrossed product are discussed. Also, some examples are given in details.

Throughout this paper, in addition to the standard notations $\mathbb{C}$, $\mathbb{N}$ and $\mathbb{Z}$, we also assume that all vector spaces, algebras and tensors are over $\mathbb{C}$. % are denoted by $\otimes$.
Moreover, the space of polynomials in $\lambda$ with coefficients in a vector space $A$ is denoted by $A[\lambda]$.

\section{Preliminaries}
In this section, some concepts related to Leibniz conformal algebras are recalled.
\begin{defi}
A \emph{conformal algebra} $R$ is a $\mathbb{C}[\partial]$-module  endowed with a $\mathbb{C}$-bilinear map
\begin{equation*}\label{251}
R\times R\rightarrow R[\lambda], ~~~~~~a\times b\mapsto [a_\lambda b],
\end{equation*}
satisfying the following axiom~($a, b\in R$):
\begin{eqnarray*}\label{251}
Conformal~~sesquilinearity:~~~~[(\partial a)_\lambda b]=-\lambda[a_\lambda b],~~~
[a_\lambda \partial b]=(\partial+\lambda)[a_\lambda b].
\end{eqnarray*}

A \emph{Lie conformal algebra} $(R,[\cdot_{\lambda}\cdot])$ is a conformal algebra satisfying ~($a, b, c\in R$):
\begin{eqnarray}
&&Skew~~symmetry:~~~~[a_\lambda b]=-[b_{-\lambda-\partial} a]\label{axiom1};\\
&&Jacobi~~identity:~~~~[a_\lambda [b_\mu c]]=[[a_\lambda b]_{\lambda+\mu} c]+[b_\mu[a_\lambda c]]\label{axiom2}.
\end{eqnarray}

A \emph{Leibniz conformal algebra} $(R,[\cdot_\lambda \cdot])$ is a conformal algebra satisfying the Jacobi identity \eqref{axiom2}.
%\begin{eqnarray*}
%Leibniz~~identity:~~~~[a_\lambda [b _\mu c]]=[[a_\lambda b]_{\lambda+\mu} c]+[b_\mu [a_\lambda
%c]],
%\end{eqnarray*}
%for any $a$, $b$, $c\in R$.
\end{defi}

A Lie (or Leibniz) conformal algebra $(R,[\cdot_\lambda \cdot])$ is called \emph{finite}, if it is a finitely generated
$\mathbb{C}[\partial]$-module; otherwise, it is said to be \emph{infinite}.

\begin{rmk}
Obviously, all Lie conformal algebras are Leibniz conformal algebras.
\end{rmk}

The following is straightforward.
\begin{pro}
Let $R=\mathbb{C}[\partial]x$ be a Leibniz conformal algebra which is a free $\mathbb{C}[\partial]$-module of rank 1. Then $R$ is either abelian or isomorphic to the Virasoro Lie conformal algebra, namely, $[x_\lambda x]=(\partial+2\lambda)x$.
\end{pro}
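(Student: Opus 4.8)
The plan is to exploit that $R=\mathbb{C}[\partial]x$ is free of rank one, so the whole bracket is encoded in a single polynomial. Write $[x_\lambda x]=p(\partial,\lambda)\,x$ with $p\in\mathbb{C}[\partial,\lambda]$; by conformal sesquilinearity we have $[(f(\partial)x)_\lambda(g(\partial)x)]=f(-\lambda)g(\partial+\lambda)p(\partial,\lambda)x$, so the entire structure is determined by $p$, and it suffices to impose the Jacobi identity on the generator. The abelian case is exactly $p=0$, so I assume $p\neq0$ and aim to show $p$ is a nonzero scalar multiple of $\partial+2\lambda$. Feeding $a=b=c=x$ into \eqref{axiom2} and using sesquilinearity to pull the polynomial coefficients across the bracket, I obtain the single polynomial identity
\[
p(\partial,\lambda)p(\partial+\lambda,\mu)-p(\partial,\mu)p(\partial+\mu,\lambda)=p(-\lambda-\mu,\lambda)\,p(\partial,\lambda+\mu),
\]
which is equivalent to the Jacobi identity for this rank-one algebra.

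First I would observe that the left-hand side is antisymmetric under $\lambda\leftrightarrow\mu$, hence so is the right-hand side; cancelling the common factor $p(\partial,\lambda+\mu)$ (nonzero since $p\neq0$ and $\mathbb{C}[\partial,\lambda,\mu]$ is a domain) yields $p(-\lambda-\mu,\lambda)=-p(-\lambda-\mu,\mu)$, i.e.\ the skew-symmetry relation $p(\partial,\lambda)=-p(\partial,-\partial-\lambda)$. Thus a rank-one Leibniz conformal algebra is automatically a Lie conformal algebra. The crux is then a degree bound. Viewing the identity above as a polynomial identity in $\partial$ with $\lambda,\mu$ as parameters, and writing $m=\deg_\partial p$ with leading coefficient $A_m(\lambda)$, the top terms $\partial^{2m}$ on the left cancel, while the coefficient of $\partial^{2m-1}$ works out to $m\,A_m(\lambda)A_m(\mu)(\lambda-\mu)$, which is nonzero; since the right-hand side has $\partial$-degree exactly $m$ (the prefactor $p(-\lambda-\mu,\lambda)$ is $\partial$-free and nonzero), comparing degrees forces $2m-1=m$, that is $m=1$. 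Skew-symmetry rules out $m=0$, since that would force $p=0$. I expect this degree comparison to be the main obstacle, as it is the only genuine computation and the step that excludes spurious higher-degree solutions.

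Once $m=1$, I would write $p(\partial,\lambda)=c_1(\lambda)\partial+c_0(\lambda)$. Skew-symmetry at $\partial=\lambda=0$ gives $c_0(0)=p(0,0)=0$, and substituting $\partial=-\lambda-u$ into $p(\partial,\lambda)=-p(\partial,-\partial-\lambda)$ produces the symmetric relation
\[
c_0(\lambda)+c_0(u)=(\lambda+u)\bigl(c_1(\lambda)+c_1(u)\bigr).
\]
Setting $u=0$ yields $c_0(\lambda)=\lambda\bigl(c_1(\lambda)+\alpha\bigr)$ with $\alpha:=c_1(0)$; feeding this back and then setting $u=\lambda$ forces $2\lambda\bigl(c_1(\lambda)-\alpha\bigr)=0$, hence $c_1(\lambda)\equiv\alpha$ and $c_0(\lambda)=2\alpha\lambda$, so that $p(\partial,\lambda)=\alpha(\partial+2\lambda)$. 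Finally, if $\alpha=0$ then $p=0$ and $R$ is abelian; if $\alpha\neq0$, replacing the generator $x$ by $\alpha^{-1}x$ (the only nontrivial $\mathbb{C}[\partial]$-module automorphisms are scalings, since the units of $\mathbb{C}[\partial]$ are the nonzero scalars) rescales $p$ by $\alpha^{-1}$ and carries the bracket to $[x_\lambda x]=(\partial+2\lambda)x$, exhibiting $R$ as the Virasoro Lie conformal algebra.
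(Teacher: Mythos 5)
Your proof is correct and complete; the paper offers no argument for this proposition (it is simply declared straightforward), so there is no competing proof to compare against. Your reduction to the functional equation for $p(\partial,\lambda)$, the derivation of skew-symmetry by antisymmetrizing in $\lambda$ and $\mu$ and cancelling $p(\partial,\lambda+\mu)$ in the domain $\mathbb{C}[\partial,\lambda,\mu]$, the degree count giving $\deg_\partial p=1$ via the nonvanishing $\partial^{2m-1}$-coefficient $m(\lambda-\mu)A_m(\lambda)A_m(\mu)$, and the final rescaling of the generator all check out; this is exactly the verification the authors left to the reader.
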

%\begin{proof}
%It is easy to get.
%\end{proof}

Suppose that $(R,[\cdot_\lambda \cdot])$ is a Leibniz conformal algebra. For $a\in R$, if $[a_\lambda b]=0$ (resp. $[b_\lambda a]=0$) for any $b\in R$, then
$a$ is called a \emph{left-central element} (resp. \emph{right-central element}) of $R$. The set of all left-central elements (resp. right-central elements) of $R$ is called \emph{ the left-center} (resp. \emph{right-center}) of $R$. %Similarly, for some fixed element $a\in R$, if for any $b\in R$, $[b_\lambda a]=0$, then
%$a$ is called \emph{ a right-central element} of $R$. The set of all right-central elements of $R$ is called \emph{ the right-center} of $R$.

\begin{defi}
Let $U$ and $V$ be two $\mathbb{C}[\partial]$-modules. A \emph{left conformal linear map} from $U$ to $V$ is a $\mathbb{C}$-linear map %$a: U\rightarrow V[\lambda]$, denoted by
$a_\lambda: U\rightarrow V[\lambda]$, such that $a_\lambda(\partial u)=-\lambda a_\lambda u$, $\forall \ u\in U$. Similarly, a \emph{right conformal linear map} from $U$ to $V$ is a $\mathbb{C}$-linear map %$a: U\rightarrow V[\lambda]$, denoted by
$a_\lambda: U\rightarrow V[\lambda]$, such that $a_\lambda(\partial u)=(\partial+\lambda)a_\lambda u$, $\forall \ u\in U$. A right conformal linear map is usually called a conformal linear map in short.

Moreover, let $W$ also be a $\mathbb{C}[\partial]$-module. A \emph{conformal bilinear map} from $U\times V$ to $W$ is a $\mathbb{C}$-bilinear map %$f: U\times V \rightarrow W[\lambda]$, denoted by
$f_\lambda: U\times V\rightarrow W[\lambda]$, such that $f_\lambda(\partial u,v)=-\lambda f_\lambda(u,v)$ and $f_\lambda( u,\partial v)= (\partial+\lambda)f_\lambda(u,v)$, $\forall \ u\in U, \ v\in V$.
\end{defi}

\begin{defi}
A \emph{module} $Q$ over a Leibniz conformal algebra $(R, [\cdot_\lambda\cdot])$ is a $\mathbb{C}[\partial]$-module endowed with two conformal bilinear maps
$R\times Q\longrightarrow Q[\lambda]$, $(a, x)\mapsto a\rightharpoonup_\lambda x$ and $Q\times R\longrightarrow Q[\lambda]$, $(x, a)\mapsto x\lhd_\lambda a$ satisfying the following axioms $(a, b\in R, x\in Q)$:
\begin{eqnarray}
a\rightharpoonup_\lambda (b\rightharpoonup_\mu x)=[a_\lambda b]\rightharpoonup_{\lambda+\mu}x
+b\rightharpoonup_\mu(a\rightharpoonup_\lambda x),\\
a\rightharpoonup_\lambda(x\lhd_\mu b)=(a\rightharpoonup_\lambda x)\lhd_{\lambda+\mu}b+x\lhd_\mu[a_\lambda b],\\
x\lhd_\lambda[a_\mu b]=(x\lhd_\lambda a)\lhd_{\lambda+\mu}b+a\rightharpoonup_\mu(x\lhd_\lambda b).
\end{eqnarray}
We also denote it by $(Q,\rightharpoonup_\lambda, \lhd_\lambda)$.
\end{defi}
\begin{defi}
Let $(R, [\cdot_\lambda\cdot])$ be a Leibniz conformal algebra. A conformal linear map $D_\lambda:R\longrightarrow R[\lambda]$ is called a \emph{conformal derivation} if
\begin{equation*}
D_\lambda([a_\mu b])=[(D_\lambda a)_{\lambda+\mu} b]+[a_\mu (D_\lambda b)],\ \ \mbox{ \ for all $a,b\in R$.}
\end{equation*}

A left conformal linear map $D_\lambda:R\longrightarrow R[\lambda]$ is called a \emph{conformal anti-derivation} if
\begin{equation*}
D_{\lambda+\mu}([a_\lambda b])=[a_\lambda (D_\mu b)]-[b_\mu (D_\lambda a)],\ \ \mbox{ \ for all $a,b\in R$.}
\end{equation*}
\end{defi}
It is easy to see that for any $a\in R$, the map $({\rm ad}_a)_\lambda$, defined by $({\rm ad}_a)_\lambda b= [a\, {}_\lambda\, b]$ for any $b\in R$, is a conformal derivation of $R$. All conformal derivations of this kind are called \emph{inner conformal derivations}. Denote by $CDer(R)$
and $CInn(R)$ the vector spaces of all conformal derivations and inner conformal derivations of $R$, respectively.

Similarly, for any $a\in R$, the map $({\rm Ad}_a)_\lambda$, defined by $({\rm Ad}_a)_\lambda b= [b\, {}_\lambda\, a]$ for any $b\in R$, is a conformal anti-derivation of $R$. All conformal anti-derivations of this kind are called \emph{inner conformal anti-derivations}.

Finally, we introduce the following definition which is important for studying
the $\mathbb{C}[\partial]$-split extending structures problem.
\begin{defi}
Let $(R, [\cdot_\lambda\cdot])$ be a Leibniz conformal algebra, $Q$ a $\mathbb{C}[\partial]$-module and $E=R\oplus Q$ where the direct sum is the sum of $\mathbb{C}[\partial]$-modules. Set $\varphi: E\rightarrow E$ be a $\mathbb{C}[\partial]$-module homomorphism. We consider  the following
diagram:
$$\xymatrix{ {R}\ar[d]^{Id}\ar[r]^{i}& {E}\ar[d]^{\varphi}\ar[r]^{\pi} & {Q}\ar[d]^{Id} \\
{R}\ar[r]^{i}&{E}\ar[r]^{\pi} &{Q} }$$
where $\pi: E\rightarrow Q$ is the natural projection of $E=R\oplus Q$ onto $Q$ and
$i: R\rightarrow E$ is the inclusion map. If the left square
(resp. the right square) of the  above diagram is commutative,
we call that
$\varphi: E\rightarrow E$ \emph{stabilizes} $R$ (resp. \emph{co-stabilizes} $Q$).

Let $[\cdot_\lambda\cdot]$ and $[\cdot_\lambda\cdot]^{'}$ be two Leibniz conformal algebra structures on $E$ both containing $R$ as a Leibniz conformal subalgebra.
If there exists a Leibniz conformal algebra isomorphism $\varphi: (E, [\cdot_\lambda \cdot])\rightarrow (E,[\cdot_\lambda\cdot]^{'})$ which stabilizes
$R$, then $[\cdot_\lambda\cdot]$ and $[\cdot_\lambda\cdot]^{'}$ are called \emph{equivalent}. In this case, we denote it by
$(E,[\cdot_\lambda\cdot])\equiv (E,[\cdot_\lambda\cdot]^{'})$.
If there exists a Leibniz conformal algebra isomorphism $\varphi: (E, [\cdot_\lambda \cdot])\rightarrow (E,[\cdot_\lambda\cdot]^{'})$ which stabilizes
$R$ and co-stabilizes $Q$, then $[\cdot_\lambda\cdot]$ and $[\cdot_\lambda\cdot]^{'}$ are called \emph{cohomologous}. In this case, we denote it by
$(E,[\cdot_\lambda\cdot])\approx (E,[\cdot_\lambda\cdot]^{'})$.
\end{defi}

Obviously, $\equiv $ and $\approx $ are equivalence relations on the set of all Leibniz conformal algebra structures on
$E$ containing $R$ as a Leibniz conformal subalgebra. We denote  the set
of all equivalence classes via $\equiv $ (resp. $\approx $) by $\text{CExtd}(E,R)$ (resp. $\text{CExtd}^{'}(E,R)$). It is easy to see that $\text{CExtd}(E,R)$ is the classifying object of the
$\mathbb{C}[\partial]$-split extending structures problem, and there exists a canonical projection $\text{CExtd}^{'}(E,R)\twoheadrightarrow \text{CExtd}(E,R) $.

\section{Unified products of Leibniz conformal algebras}

In this section, a unified product of Leibniz conformal algebras is defined and used to provide a theoretical answer to the $\mathbb{C}[\partial]$-split extending structures problem for Leibniz conformal algebras.% using this tool.

\begin{defi}
Let $(R,[\cdot_\lambda\cdot ])$ be a Leibniz conformal algebra  and $Q$ a $\mathbb{C}[\partial]$-module. An \emph{extending datum} of $R$ by $Q$ is
a system $\Omega(R, Q)=(\leftharpoonup_\lambda, \rightharpoonup_\lambda, \lhd_\lambda, \rhd_\lambda, $$ f_\lambda, $ $\{\cdot_\lambda \cdot\})$ consisting of six conformal bilinear maps %given by
\begin{eqnarray*}
&\leftharpoonup_\lambda: R\times Q\rightarrow R[\lambda],&~~~\rightharpoonup_\lambda:
R\times Q\rightarrow Q[\lambda],\\
&\lhd_\lambda: Q\times R\rightarrow Q[\lambda],&~~~\rhd_\lambda: Q\times R\rightarrow R[\lambda],\\
&f_\lambda: Q\times Q \rightarrow R[\lambda],&~~~\{\cdot_\lambda\cdot\}: Q\times Q \rightarrow Q[\lambda].
\end{eqnarray*}
Let $\Omega(R, Q)=(\leftharpoonup_\lambda, \rightharpoonup_\lambda,\lhd_\lambda, \rhd_\lambda,  f_\lambda, \{\cdot_\lambda \cdot\} )$ be an extending datum. Denote by $R\natural_{\Omega(R, Q)}$ $Q$ $=R\natural Q$ the $\mathbb{C}[\partial]$-module
$R\times Q$ with the natural $\mathbb{C}[\partial]$-module action: $\partial (a, x)=(\partial a, \partial x)$ and the bilinear map $
[\cdot_\lambda \cdot]: (R\times Q)\times (R\times Q)\rightarrow (R\times Q)[\lambda]$ defined by
\begin{gather}
[(a,x)_\lambda (b,y)]\nonumber\\
\label{e3}=([a_\lambda b]+a\leftharpoonup_\lambda y+x\rhd_\lambda b+f_\lambda(x,y),a\rightharpoonup_\lambda y+x\lhd_\lambda b+\{x_\lambda y\}),
\end{gather}
for all $a$, $b\in R$, $x$, $y\in Q$. Since $\lhd_\lambda$, $\rhd_\lambda$, $\leftharpoonup_\lambda$, $\rightharpoonup_\lambda$, $f_\lambda$ and $\{\cdot_\lambda\cdot\}$ are conformal bilinear maps, the $\lambda$-product defined by (\ref{e3}) satisfies conformal sesquilinearity. Then
$R\natural Q$ is called the \emph{unified product} of $R$ and $Q$ associated with $\Omega(R, Q)$ if it is a Leibniz conformal algebra with the $\lambda$-product given by (\ref{e3}). In this case, the extending datum $\Omega(R, Q)$ is called a \emph{Leibniz conformal extending structure} of $R$ by $Q$.
We denote by $\mathcal{TCL}(R,Q)$ the set of all Leibniz conformal extending structures of $R$ by $Q$.
\end{defi}

By (\ref{e3}), the following equalities hold in $R\natural Q$ for all $a$, $b\in R$, $x$, $y\in Q$:
\begin{eqnarray}
&&\label{er1}[(a,0)_{\lambda}(b,0)]=([a_{\lambda} b],0),~~~[(a,0)_{\lambda}(0,y)]=(a\leftharpoonup_\lambda y, a\rightharpoonup_\lambda y),\\
&&\label{er2}[(0,x)_{\lambda}(b,0)]=(x\rhd_{\lambda} b, x\lhd_{\lambda}b),~~~[(0,x)_{\lambda} (0,y)]=(f_{\lambda}(x,y),\{x_{\lambda} y\}).
\end{eqnarray}

\begin{thm}\label{t1}
Let $R$ be a Leibniz conformal algebra, $Q$ be a $\mathbb{C}[\partial]$-module and $\Omega(R, Q)$ an extending datum of $R$ by $Q$.
Then $R\natural Q$ is a Leibniz conformal algebra if and only if the following conditions are satisfied for all
$a$, $b\in R$ and $x$, $y$, $z\in Q$:
\begin{eqnarray*}
(L1)&&~~[a_\lambda b]\rightharpoonup_{\lambda+\mu}x=a\rightharpoonup_\lambda(b\rightharpoonup_\mu x)-b\rightharpoonup_\mu(a\rightharpoonup_\lambda x),\\
(L2)&&~~[a_\lambda b]\leftharpoonup_{\lambda+\mu}x=[a_\lambda(b\leftharpoonup_\mu x)]+a\leftharpoonup_\lambda(b\rightharpoonup_\mu x)-[b_\mu(a\leftharpoonup_\lambda x)]-b\leftharpoonup_\mu(a\rightharpoonup_\lambda x),\\
(L3)&&~~x\lhd_\mu [a_\lambda b]=a\rightharpoonup_\lambda (x\lhd_\mu b)-(a\rightharpoonup_\lambda x)\lhd_{\lambda+\mu} b,\\
(L4) &&~~x\rhd_\mu [a_\lambda b]=[a_\lambda (x\rhd_\mu b)]+a\leftharpoonup_\lambda (x\lhd_\mu b)-[(a\leftharpoonup_\lambda x)_{\lambda+\mu}b]-(a\rightharpoonup_\lambda x)\rhd_{\lambda+\mu} b,\\
(L5) &&~~[(x\rhd_\mu a)_{\lambda+\mu}b]+(x\lhd_\mu a)\rhd_{\lambda+\mu}b+[(a\leftharpoonup_\lambda x)_{\lambda+\mu}b]
+(a\rightharpoonup_\lambda x)\rhd_{\lambda+\mu}b=0,\\
(L6) &&~~(x\lhd_\mu a)\lhd_{\lambda+\mu} b+(a\rightharpoonup_\lambda x)\lhd_{\lambda+\mu}b=0,\\
(L7)&&~~a\rightharpoonup_\lambda\{x_\mu y\}=\{(a\rightharpoonup_\lambda x)_{\lambda+\mu}y\}+x\lhd_\mu(a\leftharpoonup_\lambda y)+\{x_\mu(a\rightharpoonup_\lambda y)\}\\&&\ \ \ \ \ \ \ \ \ \ \ \ \ \ \ \ \ \ \ \,+(a\leftharpoonup_\lambda x)\rightharpoonup_{\lambda+\mu}y,\\
(L8)&&~~a\leftharpoonup_\lambda\{x_\mu y\}=(a\leftharpoonup_\lambda x)\leftharpoonup_{\lambda+\mu}y+f_{\lambda+\mu}(a\rightharpoonup_\lambda x,y)+x\rhd_\mu(a\leftharpoonup_\lambda y)\\&&\ \ \ \ \ \ \ \ \ \ \ \ \ \ \ \ \ \ \
+f_\mu(x,a\rightharpoonup_\lambda y)-[a_\lambda f_\mu(x,y)],\\
(L9)&&~~(x\rhd_\mu a)\leftharpoonup_{\lambda+\mu} y+f_{\lambda+\mu}(x\lhd_\mu a,y)+(a\leftharpoonup_\lambda x)\leftharpoonup_{\lambda+\mu}y+f_{\lambda+\mu}(a\rightharpoonup_\lambda x,y)=0,\\
(L10)&&~~\{(a\rightharpoonup_\lambda x)_{\lambda+\mu}y\}+(a\leftharpoonup_\lambda x)\rightharpoonup_{\lambda+\mu}y
+(x\rhd_\mu a)\rightharpoonup_{\lambda+\mu}y+\{(x\lhd_\mu a)_{\lambda+\mu}y\}=0,\\
(L11)&&~~\{x_\lambda y\}\rhd_{\lambda+\mu}a=x\rhd_\lambda(y\rhd_\mu a)+f_\lambda(x,y\lhd_\mu a)
-[f_\lambda(x,y)_{\lambda+\mu}a]-y\rhd_\mu(x\rhd_\lambda a)\\&&\ \ \ \ \ \ \ \ \ \ \ \ \ \ \ \ \ \ \ \ \,-f_\mu(y,x\lhd_\lambda a),
\end{eqnarray*}
\begin{eqnarray*}
(L12)&&~~\{x_\lambda y\}\lhd_{\lambda+\mu}a=x\lhd_\lambda(y\rhd_\mu a)+\{x_\lambda(y\lhd_\mu a)\}-y\lhd_\mu(x\rhd_\lambda a)-\{y_\mu(x\lhd_\lambda a)\},\\
(L13)&&~~x\rhd_\lambda f_\mu(y,z)=f_\lambda(x,y)\leftharpoonup_{\lambda+\mu}z+f_{\lambda+\mu}(\{x_\lambda y\},z)
+y\rhd_\mu f_\lambda(x,z)\\&&\ \ \ \ \ \ \ \ \ \ \ \ \ \ \ \ \ \ \ \,+f_\mu(y,\{x_\lambda z\})-f_\lambda(x,\{y_\mu z\}),\\
(L14)&&x\lhd_\lambda f_\mu(y,z)=f_\lambda(x,y)\rightharpoonup_{\lambda+\mu}z+y\lhd_\mu f_\lambda(x,z)+\{\{x_\lambda y\}_{\lambda+\mu}z\}+\{y_\mu\{x_\lambda z\}\}\\&&\ \ \ \ \ \ \ \ \ \ \ \ \ \ \ \ \ \ \ \,-\{x_\lambda\{y_\mu z\}\}.
\end{eqnarray*}

\end{thm}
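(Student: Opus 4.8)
The statement is an equivalence, and the natural strategy is to reduce the Leibniz conformal algebra axiom for $R\natural Q$ to a finite list of component identities by testing it on a spanning set. Since $R\natural Q=R\oplus Q$ as a $\mathbb{C}[\partial]$-module, and conformal sesquilinearity is already guaranteed by the bilinearity hypotheses (as noted after Definition of the unified product), the only axiom to verify is the Jacobi identity \eqref{axiom2}:
\[
[(a,x)_\lambda[(b,y)_\mu(c,z)]]=[[(a,x)_\lambda(b,y)]_{\lambda+\mu}(c,z)]+[(b,y)_\mu[(a,x)_\lambda(c,z)]].
\]
The plan is to expand both sides using the defining formula \eqref{e3}, then exploit bilinearity to decompose each triple $((a,x),(b,y),(c,z))$ into its eight ``pure'' components, i.e.\ evaluate the Jacobi identity on generators of the form $(a,0)$ or $(0,x)$. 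Because the identity is $\mathbb{C}[\partial]$-multilinear in the three slots, it holds in general if and only if it holds on each of these eight pure triples.

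First I would dispose of the triple $((a,0),(b,0),(c,0))$, which recovers exactly the Jacobi identity for $R$ and is automatic since $R$ is a Leibniz conformal algebra. Then I would run through the remaining seven mixed triples one at a time, projecting each resulting equation onto its $R$-component and its $Q$-component via the formulas \eqref{er1} and \eqref{er2}. Each projection yields one of the identities (L1)--(L14); the bookkeeping is organized so that triples with exactly one $Q$-entry produce (L1)--(L6), triples with exactly two $Q$-entries produce (L7)--(L12), and the triple $((0,x),(0,y),(0,z))$ with three $Q$-entries produces (L13)--(L14). Concretely, for instance, the triple $((a,0),(b,0),(0,x))$ gives (L1) on the $Q$-side and (L2) on the $R$-side, while $((0,x),(0,y),(0,z))$ splits into (L13) on the $R$-side and (L14) on the $Q$-side. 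Conversely, assuming (L1)--(L14), reassembling the eight cases by multilinearity reconstructs the full Jacobi identity, giving the reverse implication.

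The main obstacle is purely organizational rather than conceptual: the computation involves keeping the three spectral parameters straight under sesquilinearity (each application of $\partial$ shifts a $\lambda$ or $\mu$), and correctly tracking which of the six structure maps $\leftharpoonup_\lambda,\rightharpoonup_\lambda,\lhd_\lambda,\rhd_\lambda,f_\lambda,\{\cdot_\lambda\cdot\}$ appears in each nested bracket. Care is needed because the inner bracket $[(b,y)_\mu(c,z)]$ already mixes $R$- and $Q$-components, so the outer bracket feeds both components back through \eqref{e3}; I expect the most error-prone step to be the two-$Q$-entry cases, where cross-terms between $f_\lambda$ and the actions proliferate and must be matched sign-for-sign against (L7)--(L12). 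Once the pure-triple reduction is set up, each individual verification is a routine (if lengthy) expansion, and collecting the $R$- and $Q$-components of the seven nontrivial triples exhausts precisely the fourteen listed conditions.
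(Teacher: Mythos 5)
Your proposal is correct and follows essentially the same route as the paper: the paper likewise reduces the Jacobi identity for $R\natural Q$ to its evaluation on the eight pure triples of elements $(a,0)$, $(0,x)$, works out one representative case ($J((a,0),(0,x),(b,0))=0$ giving (L3)--(L4)), and leaves the remaining projections as analogous computations. Your accounting of which triples yield which of (L1)--(L14) is consistent with the paper's.
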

\begin{proof}
Set
$$J((a,x),(b,y),(c,z))=[(a,x)_\lambda[(b,y)_\mu (c,z)]]-[[(a,x)_\lambda (b,y)]_{\lambda+\mu}(c,z)]-[(b,y)_\mu[(a,x)_\lambda (c,z)]]$$
for all $a$, $b$, $c\in R$, $x$, $y$, $z\in Q.$ It is easy to see that Jacobi identity  holds for \eqref{e3} if and only if
$J((a,0),(b,0),$ $(c,0))=0$,
$J((a,0),(b,0),(0,x))=0$, $J((a,0),(0,x),(b,0))=0$, $J((0,x),(a,0),$ $(b,0))=0$,
$J((a,0),(0,x),(0,y))=0$, $J((0,x),(a,0),(0,y))=0$, $J((0,x),(0,y),$ $(a,0))$$=0$,
and $J((0,x),(0,y),(0,z))=0$
 for all $a$, $b$, $c\in R$ and  $x$, $y$, $z\in Q$.

Obviously, $J((a,0),(b,0),(c,0))=0$ if and only if  $R$ is a Leibniz conformal algebra. Since
\begin{eqnarray*}
J((a,0),(0,x),(b,0))&=&[(a,0)_\lambda(x\rhd_\mu b,x\lhd_\mu b)]-[(a\leftharpoonup_\lambda x, a\rightharpoonup_\lambda x)_{\lambda+\mu}(b,0)]\\&&
-[(0,x)_\mu([a_\lambda b],0)]\\
&=&([a_\lambda(x\rhd_\mu b)]+a\leftharpoonup_\lambda(x\lhd_\mu b),a\rightharpoonup_\lambda(x\lhd_\mu b))\\&&
-([(a\leftharpoonup_\lambda x)_{\lambda+\mu}b]+(a\rightharpoonup_\lambda x)\rhd_{\lambda+\mu}b,(a\rightharpoonup_\lambda x)\lhd_{\lambda+\mu}b)\\
&&-(x\rhd_\mu [a_\lambda b], x\lhd_\mu[a_\lambda b])\\&=&0,
\end{eqnarray*}
we have $J((a,0),(0,x),(b,0))=0$ if and only if $(L3)$ and $(L4)$ are satisfied.

With a similar computation, one can easily check that the Leibniz identity  is satisfied for (\ref{e3}) if and only if $(L1)$-$(L14)$ hold.
\end{proof}

\begin{rmk}
By (L1), (L3) and (L6), %it is easy to see that
$(Q,\rightharpoonup_\lambda,\lhd_\lambda)$ is a module of $R$.
\end{rmk}

\begin{ex}
Suppose that $\Omega(R, Q)=(\leftharpoonup_\lambda, \rightharpoonup_\lambda, \lhd_\lambda, \rhd_\lambda, $$ f_\lambda, $ $\{\cdot_\lambda \cdot\})$ is an extending datum of a Leibniz conformal algebra $R$ by a $\mathbb{C}[\partial]$-module $Q$ with $\leftharpoonup_\lambda$, $\rhd_\lambda$, $ f_\lambda, $ and $\{\cdot_\lambda \cdot\}$ trivial. We denote this extending datum simply by $(\rightharpoonup_\lambda, \lhd_\lambda)$. Obviously, this extending datum is a Leibniz conformal extending structure if and only if $(Q,\rightharpoonup_\lambda,\lhd_\lambda)$ is a module of $R$. The corresponding Leibniz conformal algebra on $R\oplus Q$ is just the semi-direct sum of $R$ and $Q$.
\end{ex}
\begin{thm}\label{t2}
Let $R$ be a Leibniz conformal algebra and $Q$ a $\mathbb{C}[\partial]$-module. Set $E=R\oplus Q$ where the direct sum is the sum of $\mathbb{C}[\partial]$-modules. Suppose that $E$ has a Leibniz conformal algebra structure $[\cdot_\lambda \cdot]$  such that $R$ is a subalgebra. Then
there exists a Leibniz conformal extending structure $\Omega(R,Q)=(\leftharpoonup_\lambda, \rightharpoonup_\lambda,\lhd_\lambda,\rhd_\lambda, f_\lambda, \{\cdot_\lambda\cdot\} )$ of
$R$ by $Q$ and an isomorphism of Leibniz conformal algebras $E\approx R\natural Q$.
\end{thm}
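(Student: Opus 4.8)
The plan is to read the six structure maps directly off the given bracket on $E$ by means of the two canonical $\mathbb{C}[\partial]$-module projections $p\colon E\to R$ and $\pi\colon E\to Q$ attached to the decomposition $E=R\oplus Q$. Concretely, for $a,b\in R$ and $x,y\in Q$ I would set
\begin{align*}
a\leftharpoonup_\lambda y &:= p([a_\lambda y]), & a\rightharpoonup_\lambda y &:= \pi([a_\lambda y]),\\
x\rhd_\lambda b &:= p([x_\lambda b]), & x\lhd_\lambda b &:= \pi([x_\lambda b]),\\
f_\lambda(x,y) &:= p([x_\lambda y]), & \{x_\lambda y\} &:= \pi([x_\lambda y]).
\end{align*}
Because $p$ and $\pi$ are $\mathbb{C}[\partial]$-linear and the bracket of $E$ obeys conformal sesquilinearity, each of these six maps inherits conformal bilinearity at once; this produces an extending datum $\Omega(R,Q)$.

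Next I would verify that, under the evident $\mathbb{C}[\partial]$-module identification $E\cong R\times Q$ sending $a+x\mapsto(a,x)$, the bracket of $E$ coincides with the product \eqref{e3} defining $R\natural Q$. Expanding $[(a+x)_\lambda(b+y)]$ bilinearly into the four pieces $[a_\lambda b]$, $[a_\lambda y]$, $[x_\lambda b]$ and $[x_\lambda y]$, the hypothesis that $R$ is a subalgebra forces $[a_\lambda b]\in R$, so $\pi([a_\lambda b])=0$; collecting the $R$- and $Q$-components of the remaining three brackets through $p$ and $\pi$ reproduces exactly the two coordinates of \eqref{e3}. The subalgebra hypothesis is used precisely here: it guarantees that the $R$-bracket contributes nothing to the $Q$-component, which is what permits the product to take the special coordinate form \eqref{e3} rather than a more general one.

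Once this identification of brackets is in place, almost nothing remains to be checked by hand. The map $\varphi\colon E\to R\natural Q$, $\varphi(a+x)=(a,x)$, is a $\mathbb{C}[\partial]$-module isomorphism intertwining the two brackets, hence a Leibniz conformal algebra isomorphism; consequently $R\natural Q$ is itself a Leibniz conformal algebra. By Theorem \ref{t1} this is equivalent to the compatibility conditions $(L1)$--$(L14)$, so $\Omega(R,Q)$ is automatically a Leibniz conformal extending structure, and I would not verify $(L1)$--$(L14)$ directly. Finally, $\varphi$ restricts to the identity on $R$ and commutes with the projections onto $Q$, i.e.\ it stabilizes $R$ and co-stabilizes $Q$, which yields $E\approx R\natural Q$ in the sense of the cohomologous relation.

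The main (and essentially the only) obstacle is the bookkeeping in the middle step: one must be careful that the four-term expansion of $[(a+x)_\lambda(b+y)]$ genuinely lands componentwise in $R$ and $Q$ as claimed, and in particular that the subalgebra condition is invoked exactly at the spot where $f_\lambda$ would otherwise acquire a spurious contribution from $[a_\lambda b]$. Everything else is formal, and the slick point is the reversal of Theorem \ref{t1}, which converts the verification of $(L1)$--$(L14)$ into the single already-known fact that $E$ is a Leibniz conformal algebra.
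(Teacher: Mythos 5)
Your proposal is correct and follows essentially the same route as the paper: the paper defines the six structure maps by exactly the same formulas (with $\pi=\mathrm{id}-p$), and then, as you do, transports the Leibniz conformal algebra structure of $E$ to $R\natural Q$ and invokes Theorem \ref{t1} in reverse rather than checking $(L1)$--$(L14)$ by hand. Your write-up in fact spells out the bookkeeping (where the subalgebra hypothesis kills the $Q$-component of $[a_\lambda b]$) that the paper delegates to the analogous argument in \cite{HS}.
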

\begin{proof}
Since $E=R\oplus Q$, there is a natural $\mathbb{C}[\partial]$-module homomorphism $p: E\rightarrow R$ such that
$p(a)=a$ for all $a\in R$. Then we can define an extending datum $\Omega(R,Q)=(\leftharpoonup_\lambda, \rightharpoonup_\lambda, \lhd_\lambda,\rhd_\lambda, f_\lambda, \{\cdot_\lambda\cdot\} )$ of $R$ by $Q$ as follows ($a\in R, x,y\in Q$):
\begin{eqnarray}
&\leftharpoonup_\lambda:R \times Q\rightarrow R[\lambda],&~~~~~~~a\leftharpoonup_\lambda x=p([a_\lambda x]),\\
&\rightharpoonup_\lambda: R \times Q\rightarrow Q[\lambda],&~~~~~~~a\rightharpoonup_\lambda x=[a_\lambda x]-p([a_\lambda x]),\\
&\rhd_\lambda: Q\times R\rightarrow R[\lambda],&~~~~~~~~x\rhd_\lambda a=p([x_\lambda a]),\\
&\lhd_\lambda: Q\times R\rightarrow Q[\lambda],&~~~~~~~~x\lhd_\lambda a=[x_\lambda a]-p([x_\lambda a]),\\
&f_\lambda: Q\times Q\rightarrow R[\lambda],&~~~~~~~~f_\lambda(x,y)=p([x_\lambda y]),\\
&\{\cdot_\lambda \cdot\}: Q\times Q\rightarrow Q[\lambda],&~~~~\{x_\lambda y\}=[x_\lambda y]-p([x_\lambda y]).
\end{eqnarray}
With the similar proof as that in \cite[Theorem 3.5]{HS}, it is easy to show that $\Omega(R,Q)=(\leftharpoonup_\lambda, \rightharpoonup_\lambda,\lhd_\lambda,\rhd_\lambda, f_\lambda, \{\cdot_\lambda\cdot\} )$ is a Leibniz conformal extending structure and
$E\approx R\natural Q$ as Leibniz conformal algebras.
\end{proof}
\begin{defi}\label{DD1}
Let $R$ be a Leibniz conformal algebra and $Q$ a $\mathbb{C}[\partial]$-module. If there exists a pair of $\mathbb{C}[\partial]$-module homomorphisms $(u,v)$, where $u: Q\rightarrow R$, $v\in \text{Aut}_{\mathbb{C}[\partial]}(Q)$ such that
a Leibniz conformal extending structure $\Omega(R,Q)=(\leftharpoonup_\lambda,\rightharpoonup_\lambda, \lhd_\lambda,\rhd_\lambda,
f_\lambda,\{\cdot_\lambda\cdot\} )$ can be obtained from
another one $\Omega^{'}(R,Q)=(\leftharpoonup_\lambda^{'},\rightharpoonup_\lambda^{'}, \lhd_\lambda^{'},\rhd_\lambda^{'},
f_\lambda^{'}, \{\cdot_\lambda\cdot\} ^{'} )$ via
$(u,v)$ as follows:
\begin{gather}
\label{g1}a\rightharpoonup_\lambda x=v^{-1}(a\rightharpoonup_\lambda^{'}v(x)),\\
a\leftharpoonup_\lambda x=[a_\lambda u(x)]+a\leftharpoonup_\lambda^{'}v(x)-u(a\rightharpoonup_\lambda x),\\
x\lhd_\lambda a=v^{-1}(v(x)\lhd_\lambda^{'}a),\\
x\rhd_\lambda a=[u(x)_\lambda a]+v(x)\rhd_\lambda^{'}a-u(x\lhd_\lambda a),\\
\{x_\lambda y\}=v^{-1}(u(x)\rightharpoonup_\lambda^{'}v(y))+v^{-1}(v(x)\lhd_\lambda^{'}u(y))+v^{-1}(\{v(x)_\lambda v(y)\}^{'}),\\
\label{g6}f_\lambda(x,y)=[u(x)_\lambda u(y)]+u(x)\leftharpoonup_\lambda^{'}v(y)
+v(x)\rhd_\lambda^{'}u(y)+f_\lambda^{'}(v(x),v(y))-u(\{x_\lambda y\}),
\end{gather}
for all $a\in R$, $x$, $y\in Q$, then $\Omega(R,Q)$ and  $\Omega^{'}(R,Q)$ are called \emph{equivalent}. In this case, we denote it
by $\Omega(R,Q)\equiv \Omega^{'}(R,Q)$.

In particular, if $v=Id_Q$, then $\Omega(R,Q)$ and  $\Omega^{'}(R,Q)$ are called \emph{cohomologous}. In this case, we denote it by $\Omega(R,Q)\approx \Omega^{'}(R,Q)$.
\end{defi}

\begin{lem}\label{l1}
Suppose that $\Omega(R,Q)=(\leftharpoonup_\lambda,\rightharpoonup_\lambda, \lhd_\lambda,\rhd_\lambda,
f_\lambda,\{\cdot_\lambda\cdot\} )$ and $\Omega^{'}(R,Q)=($ $\leftharpoonup_\lambda^{'}$ $\rightharpoonup_\lambda^{'}$, $\lhd_\lambda^{'},\rhd_\lambda^{'},$ $f_\lambda^{'},$ $ \{\cdot_\lambda\cdot\}^{'})$
are two Leibniz conformal extending structures of $R$ by $Q$.
Let $R\natural Q$ and $R\natural^{'} Q$ be the corresponding unified products.
Then $R\natural Q \equiv  R\natural^{'} Q$ if and only if $\Omega(R,Q)\equiv \Omega^{'}(R,Q)$,
and $R\natural Q \approx  R\natural^{'} Q$ if and only if $\Omega(R,Q)\approx \Omega^{'}(R,Q)$.
\end{lem}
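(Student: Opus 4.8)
The plan is to build an explicit bijection between the Leibniz conformal algebra isomorphisms $\varphi\colon E\to E$ that stabilize $R$ and the pairs $(u,v)$ of Definition \ref{DD1}, and then to show that the requirement that $\varphi$ be an algebra homomorphism is captured exactly by the relations \eqref{g1}--\eqref{g6}.

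First I would pin down the shape of any $\mathbb{C}[\partial]$-module homomorphism $\varphi\colon E\to E$ stabilizing $R$. Stabilizing $R$ forces $\varphi(a,0)=(a,0)$ for $a\in R$, so by additivity and $\mathbb{C}[\partial]$-linearity $\varphi$ is determined by its values on $Q$; writing $\varphi(0,x)=(u(x),v(x))$ yields $\mathbb{C}[\partial]$-module homomorphisms $u\colon Q\to R$ and $v\colon Q\to Q$ with
\[\varphi(a,x)=(a+u(x),v(x)).\]
In block form $\varphi$ is upper triangular with $Id_R$ on the $R$-factor and $v$ on the $Q$-factor, so $\varphi$ is bijective if and only if $v$ is, i.e. $v\in\text{Aut}_{\mathbb{C}[\partial]}(Q)$; and $\varphi$ co-stabilizes $Q$ (that is, $\pi\circ\varphi=\pi$) if and only if $v=Id_Q$. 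This sets up a bijection between module automorphisms of $E$ stabilizing $R$ and admissible pairs $(u,v)$.

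Next, for $\varphi$ of this form, I would reduce the homomorphism condition $\varphi([(a,x)_\lambda(b,y)])=[\varphi(a,x)_\lambda\varphi(b,y)]'$ to four ``pure'' cases. Because both $\lambda$-products are given by the bilinear formula \eqref{e3} and $\varphi$ is $\mathbb{C}$-linear, both sides of this identity are additive in the $R$- and $Q$-components of $(a,x)$ and $(b,y)$; hence the identity holds in general if and only if it holds on the four pairs of pure elements appearing in \eqref{er1}--\eqref{er2}. The case $(a,0),(b,0)$ is automatic since $\varphi|_R=Id$ and $R$ is a subalgebra for both brackets. Expanding the remaining three cases with \eqref{e3} and separating $R$- and $Q$-components then reproduces \eqref{g1}--\eqref{g6} term by term: the pair $(a,0),(0,y)$ gives \eqref{g1} from its $Q$-component (after applying $v^{-1}$) and the formula for $a\leftharpoonup_\lambda x$ from its $R$-component; the pair $(0,x),(b,0)$ gives the formulas for $x\lhd_\lambda a$ and $x\rhd_\lambda a$; and $(0,x),(0,y)$ gives the formulas for $\{x_\lambda y\}$ and $f_\lambda(x,y)$. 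This is precisely the assertion $\Omega(R,Q)\equiv\Omega'(R,Q)$.

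Both directions of the first equivalence then follow: an isomorphism $\varphi$ stabilizing $R$ produces a pair $(u,v)$ satisfying \eqref{g1}--\eqref{g6}, while conversely a pair $(u,v)$ realizing $\Omega\equiv\Omega'$ yields $\varphi(a,x)=(a+u(x),v(x))$, which is a module automorphism stabilizing $R$ and, by running the same computation backwards, a Leibniz conformal algebra isomorphism. The cohomologous statement is the specialization $v=Id_Q$: since co-stabilizing $Q$ is equivalent to $v=Id_Q$, the correspondence restricts to one between isomorphisms stabilizing $R$ and co-stabilizing $Q$ and pairs $(u,Id_Q)$, giving $R\natural Q\approx R\natural' Q$ iff $\Omega\approx\Omega'$. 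I expect the only real difficulty to be organizational: carefully expanding the three nontrivial brackets, tracking which summands land in $R$ versus $Q$ and where $v^{-1}$ must be inserted, so that the result lines up exactly with \eqref{g1}--\eqref{g6}; no new idea is needed beyond this bookkeeping.
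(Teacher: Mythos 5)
Your proposal follows essentially the same route as the paper: write any isomorphism stabilizing $R$ as $\varphi(a,x)=(a+u(x),v(x))$, observe that bijectivity corresponds to $v\in\text{Aut}_{\mathbb{C}[\partial]}(Q)$ and co-stabilizing $Q$ to $v=Id_Q$, and match the homomorphism condition on pure elements against \eqref{g1}--\eqref{g6}. The only difference is that the paper delegates the term-by-term expansion to the analogous lemma in \cite{HS}, whereas you sketch it directly; the argument is correct.
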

\begin{proof}
Let $\varphi: R\natural Q\rightarrow R\natural^{'} Q$ be an isomorphism of Leibniz conformal algebras which stabilizes $R$.
According to that $\varphi$ stabilizes $R$,  $\varphi(a,0)=(a,0)$. Then we can assume
$\varphi(a,x)=(a+u(x),v(x))$ where $u:Q\rightarrow R$, $v:Q\rightarrow Q$ are two linear maps.
Obviously, $\varphi$ is a $\mathbb{C}[\partial]$-module homomorphism if and only if
$u$, $v$ are two $\mathbb{C}[\partial]$-module homomorphisms.
Similar to that in \cite[Lemma 3.6]{HS}, it is easy to check that
$\varphi$ is an
algebra isomorphism which stabilizes $R$ if and only if $v$ is a $\mathbb{C}[\partial]$-module isomorphism, and (\ref{g1})-(\ref{g6}) hold.
Moreover, it is obvious that a Leibiniz conformal algebra isomorphism $\varphi$ stabilizes $R$ and costabilizes
$Q$ if and only if $v=Id_Q$.

Then the results follow from Definition \ref{DD1}.% and the above discussions.
\end{proof}

Now we can give an answer to the $\mathbb{C}[\partial]$-split extending structures problem for Leibniz conformal algebras.
\begin{thm}\label{t4}
Let $R$ be a Leibniz conformal algebra and $Q$ a $\mathbb{C}[\partial]$-module. Set $E=R\oplus Q$ where the direct sum is the sum of $\mathbb{C}[\partial]$-modules. %Then we get\\
\begin{itemize}
\item[(1)]Denote $\mathcal{LH}_{R}^2(Q,R):=\mathcal{TCL}(R,Q)/\equiv$. Then the map
\begin{eqnarray}
\mathcal{LH}_{R}^2(Q,R)\rightarrow \text{CExtd}(E,R),~~~~\overline{\Omega(R,Q)}\mapsto (R\natural Q,[\cdot_\lambda \cdot])
\end{eqnarray}
is bijective, where $\overline{\Omega(R,Q)}$ is the equivalence class of $\Omega(R,Q)$ under $\equiv$.\\
\item[(2)] Denote $\mathcal{LH}^2(Q,R):=\mathcal{TCL}(R,Q)/\approx$. Then the map
\begin{eqnarray}
\mathcal{LH}^2(Q,R)\rightarrow \text{CExtd}^{'}(E,R),~~~~\overline{\overline{\Omega(R,Q)}}\mapsto (R\natural Q, [\cdot_\lambda \cdot])
\end{eqnarray}
is bijective, where $\overline{\overline{\Omega(R,Q)}}$ is the equivalence class of $\Omega(R,Q)$ under $\approx$.
\end{itemize}
\end{thm}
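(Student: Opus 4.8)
The plan is to assemble Theorem \ref{t4} from the machinery already in place, treating the two parts in parallel since they differ only by replacing $\equiv$ with $\approx$ throughout. The core structural inputs are Theorem \ref{t2}, which shows every Leibniz conformal algebra structure on $E$ containing $R$ as a subalgebra arises (up to the relation $\approx$) as a unified product $R\natural Q$, and Lemma \ref{l1}, which translates equivalence and cohomologousness of the extending structures into the corresponding relations on the unified products. I will first treat part (1) in detail and then indicate the (essentially identical) modifications for part (2).

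For part (1), I would begin by checking that the map
\[
\mathcal{LH}_{R}^2(Q,R)=\mathcal{TCL}(R,Q)/\!\equiv\ \longrightarrow \text{CExtd}(E,R),\qquad \overline{\Omega(R,Q)}\mapsto (R\natural Q,[\cdot_\lambda\cdot]),
\]
is well defined. Given $\Omega(R,Q)\equiv\Omega'(R,Q)$, Lemma \ref{l1} gives $R\natural Q\equiv R\natural' Q$, so the two unified products lie in the same class of $\text{CExtd}(E,R)$; hence the assignment does not depend on the chosen representative. (One also notes that $R\natural Q$ and $R\natural' Q$ share the same underlying $\mathbb{C}[\partial]$-module $E=R\oplus Q$, so the target is genuinely $\text{CExtd}(E,R)$.) Next I would verify surjectivity: any class in $\text{CExtd}(E,R)$ is represented by some Leibniz conformal algebra structure $[\cdot_\lambda\cdot]$ on $E$ containing $R$ as a subalgebra, and Theorem \ref{t2} produces a Leibniz conformal extending structure $\Omega(R,Q)$ together with an isomorphism $E\approx R\natural Q$ stabilizing $R$ and co-stabilizing $Q$. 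Since $\approx$ refines $\equiv$, this isomorphism in particular stabilizes $R$, so $(E,[\cdot_\lambda\cdot])\equiv (R\natural Q,[\cdot_\lambda\cdot])$ and the class of $\overline{\Omega(R,Q)}$ maps onto the given class.

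The remaining point, injectivity, is where the two halves of Lemma \ref{l1} do the real work. Suppose $\overline{\Omega(R,Q)}$ and $\overline{\Omega'(R,Q)}$ map to the same class in $\text{CExtd}(E,R)$, i.e. $R\natural Q\equiv R\natural' Q$ as Leibniz conformal algebra structures on $E$ stabilizing $R$. By the first assertion of Lemma \ref{l1}, this forces $\Omega(R,Q)\equiv\Omega'(R,Q)$, hence $\overline{\Omega(R,Q)}=\overline{\Omega'(R,Q)}$ in $\mathcal{LH}_{R}^2(Q,R)$. For part (2) the argument is verbatim the same with $\equiv$ replaced by $\approx$ and $\text{CExtd}$ replaced by $\text{CExtd}'$: well-definedness and injectivity both come from the second assertion of Lemma \ref{l1}, and surjectivity is immediate from Theorem \ref{t2}, which already yields an isomorphism of the stronger type $\approx$.

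I do not expect a genuine obstacle here, since the combinatorial heart of the matter has been isolated into Theorem \ref{t2} and Lemma \ref{l1}; the present theorem is a formal bijection-by-bookkeeping argument. The one place demanding minor care is confirming that the relations $\equiv$ and $\approx$ on $\mathcal{TCL}(R,Q)$ are genuine equivalence relations (so that the quotients $\mathcal{LH}_R^2$ and $\mathcal{LH}^2$ make sense) — this is inherited from the fact that $\equiv$ and $\approx$ on structures over $E$ are equivalence relations, transported through Lemma \ref{l1}. With that in hand, well-definedness, surjectivity, and injectivity each reduce to one application of the appropriate half of Lemma \ref{l1} or Theorem \ref{t2}, completing the proof.
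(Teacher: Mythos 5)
Your proposal is correct and follows exactly the route the paper takes: the paper's proof is the one-line remark that the theorem "can be directly obtained from Theorems \ref{t1}, \ref{t2} and Lemma \ref{l1}," and your write-up simply fills in the bookkeeping (well-definedness and injectivity from Lemma \ref{l1}, surjectivity from Theorem \ref{t2}). No discrepancies to report.
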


\begin{proof}
It can be directly obtained from Theorems \ref{t1},  \ref{t2} and Lemma \ref{l1}.
\end{proof}

\section{Unified product when $Q=\mathbb{C}[\partial]x$}
In this section, we will apply the general theory developed in Section 3 to the special case when
$R$ and $Q$ are free $\mathbb{C}[\partial]$-modules with $Q=\mathbb{C}[\partial]x$.
\begin{defi}
Let $R=\mathbb{C}[\partial]V$ be a Leibniz conformal algebra which is free as a $\mathbb{C}[\partial]$-module. A \emph{flag datum} of the first kind of $R$ is
a 5-tuple $(h_\lambda(\cdot,\partial),$ $ D_\lambda,$
$T_\lambda, $ $Q_0(\lambda,\partial), $ $P(\lambda,\partial))$ where $P(\lambda,\partial)\in \mathbb{C}[\lambda,\partial]$,
$Q_0(\lambda,\partial)\in R[\lambda]$, $h_\lambda(\cdot,\partial):R\rightarrow \mathbb{C}[\lambda,\partial]$ and $D_\lambda: R\rightarrow R[\lambda]$ are two left conformal linear maps,  and  $T_\lambda: R\rightarrow R[\lambda]$ is a conformal linear map
satisfying the following conditions $(a, b\in V)$:
\begin{gather}
\label{fg1}h_{\lambda+\mu}([a_\lambda b],\partial)=h_\mu(b,\lambda+\partial)h_\lambda(a,\partial)-h_\lambda(a,\mu+\partial)h_\mu(b,\partial),\\
\label{fg2}D_{\lambda+\mu}([a_\lambda b])=[a_\lambda D_\mu(b)]+h_\mu(b,\lambda+\partial)D_\lambda(a)
-[b_\mu D_\lambda(a)]-h_\lambda(a,\mu+\partial)D_\mu(b),\\
\label{fg3}T_\mu([a_\lambda b])=[a_\lambda T_\mu(b)]-[D_\lambda(a)_{\lambda+\mu}b]-h_\lambda(a,-\lambda-\mu)T_{\lambda+\mu}(b),\\
\label{fg4}T_\mu([a_\lambda b])=[a_\lambda T_\mu(b)]+[{T_\mu(a)}_{\lambda+\mu}b],\\
\label{fg5}P(\mu,\lambda+\partial)h_\lambda(a,\partial)=h_\lambda(a,-\lambda-\mu)P(\lambda+\mu,\partial)
+h_\lambda(a,\mu+\partial)P(\mu,\partial)+h_{\lambda+\mu}(D_\lambda(a),\partial),\\
\label{fg6}P(\mu,\lambda+\partial)D_\lambda(a)=D_{\lambda+\mu}(D_\lambda(a))+h_\lambda(a,-\lambda-\mu)Q_0(\lambda+\mu,\partial)
+T_\mu(D_\lambda(a))\\
+h_\lambda(a,\mu+\partial)Q_0(\mu,\partial)-[a_\lambda Q_0(\mu,\partial)],\nonumber\\
\label{fg7}D_{\lambda+\mu}(T_\mu(a))+D_{\lambda+\mu}(D_\lambda(a))+h_\lambda(a,-\lambda-\mu)Q_0(\lambda+\mu,\partial)=0,\\
\label{fg8}h_\lambda(a,-\lambda-\mu)P(\lambda+\mu,\partial)+h_{\lambda+\mu}(D_\lambda(a),\partial)+h_{\lambda+\mu}(T_\mu(a),\partial)=0,\\
\label{fg9}P(\lambda,-\lambda-\mu)T_{\lambda+\mu}(a)=T_\lambda(T_\mu(a))-[{Q_0(\lambda,\partial)}_{\lambda+\mu}a]
-T_\mu(T_\lambda(a)),\\
T_\lambda(Q_0(\mu,\partial))=D_{\lambda+\mu}(Q_0(\lambda,\partial))+P(\lambda,-\lambda-\mu)Q_0(\lambda+\mu,\partial)
+T_\mu(Q_0(\lambda,\partial))\nonumber\\
\label{fg10}+P(\lambda,\mu+\partial)Q_0(\mu,\partial)-P(\mu,\lambda+\partial)Q_0(\lambda,\partial),\\
\label{fg11}h_{\lambda+\mu}(Q_0(\lambda,\partial),\partial)+P(\lambda,-\lambda-\mu)P(\lambda+\mu,\partial)
+P(\lambda,\mu+\partial)P(\mu,\partial)-P(\mu,\lambda+\partial)P(\lambda,\partial)=0.
\end{gather}
\end{defi}
%Denote by $\mathcal{F}_1(R)$ the set of all flag datums of the first kind of $R$.
\begin{defi}
Let $R=\mathbb{C}[\partial]V$ be a Leibniz conformal algebra which is free as a $\mathbb{C}[\partial]$-module. A \emph{flag datum} of the second kind of $R$ is
a 5-tuple $(h_\lambda(\cdot,\partial),$ $ D_\lambda,$
$T_\lambda, $ $Q_0(\lambda,\partial), $ $P(\lambda,\partial))$ where $P(\lambda,\partial)\in \mathbb{C}[\lambda,\partial]$,
$Q_0(\lambda,\partial)\in R[\lambda]$, $h_\lambda(\cdot,\partial):R\rightarrow \mathbb{C}[\lambda,\partial]$ and $D_\lambda: R\rightarrow R[\lambda]$ are two left conformal linear maps and $h_\lambda(\cdot,\partial)$ is a non-trivial map, and  $T_\lambda: R\rightarrow R[\lambda]$ is a conformal linear map
satisfying (\ref{fg1}), (\ref{fg2}), (\ref{fg6}), (\ref{fg10}) and the following conditions $(a, b\in V)$:
\begin{gather}
\label{fgg11}T_\mu([a_\lambda b])=[a_\lambda T_\mu(b)]-h_{-\lambda-\mu-\partial}(b,\lambda+\partial)D_\lambda(a)-[D_\lambda(a)_{\lambda+\mu}b]-h_\lambda(a,-\lambda-\mu)T_{\lambda+\mu}(b),\\
\label{fg12}[{T_\mu(a)}_{\lambda+\mu}b]+[D_\lambda(a)_{\lambda+\mu}b]=0,\\
P(\mu,\lambda+\partial)h_\lambda(a,\partial)=h_\lambda(a,-\lambda-\mu)P(\lambda+\mu,\partial)
-h_{-\mu-\partial}(D_\lambda(a),\partial)\nonumber\\
\label{fg13}\ \ \ \ \ \ \ \ \ \ \ \ \ \ \ \ \ \ \ \, +h_\lambda(a,\mu+\partial)P(\mu,\partial)+h_{\lambda+\mu}(D_\lambda(a),\partial),\\
\label{fg14}D_{\lambda+\mu}(T_\mu(a))+D_{\lambda+\mu}(D_\lambda(a))=0,\\
\label{fg15}h_{\lambda+\mu}(D_\lambda(a),\partial)+h_{\lambda+\mu}(T_\mu(a),\partial)=0,\\
P(\lambda,-\lambda-\mu)T_{\lambda+\mu}(a)=T_\lambda(T_\mu(a))-h_{-\lambda-\mu-\partial}(a,\lambda+\partial)Q_0(\lambda,\partial)\nonumber\\
\label{fg16}-[{Q_0(\lambda,\partial)}_{\lambda+\mu}a]
-T_\mu(T_\lambda(a))+h_{-\lambda-\mu-\partial}(a,\mu+\partial)Q_0(\mu,\partial),\\
P(\lambda,-\lambda-\mu)h_{-\lambda-\mu-\partial}(a,\partial)=h_{-\lambda-\partial}(T_\mu(a),\partial)
+h_{-\lambda-\mu-\partial}(a,\lambda+\partial)P(\lambda,\partial)\nonumber\\
\label{fg17}\ \ \ \ \ \ \ \ \ \ \ \ \ \ \ \ \ \ \ \ \ \ \ \ \ \ \ \ \ \ \ \ \ \ \ \ \ \ \,-h_{-\mu-\partial}(T_\lambda(a),\partial)
-h_{-\lambda-\mu-\partial}(a,\mu+\partial)P(\mu,\partial),\\
-h_{-\lambda-\partial}(Q_0(\mu,\partial),\partial)=h_{\lambda+\mu}(Q_0(\lambda,\partial),\partial)-h_{-\mu-\partial}(Q_0(\lambda,\partial),\partial)+P(\lambda,-\lambda-\mu)P(\lambda+\mu,\partial)\nonumber\\
\label{fg18}+P(\lambda,\mu+\partial)P(\mu,\partial)-P(\mu,\lambda+\partial)P(\lambda,\partial).
\end{gather}
\end{defi}

Denote by $\mathcal{F}_1(R)$ (resp. $\mathcal{F}_2(R)$) the set of all flag datums of the first (resp. second) kind of $R$. Set
$\mathcal{F}(R)=\mathcal{F}_1(R)\bigcup \mathcal{F}_2(R)$. The elements in $\mathcal{F}(R)$ are called \emph{flag datums} of $R$.

\begin{pro}\label{pr1}
Let $R=\mathbb{C}[\partial]V$ be a Leibniz conformal algebra which is a free $\mathbb{C}[\partial]$-module and $Q=\mathbb{C}[\partial]x$ be a free $\mathbb{C}[\partial]$-module of rank 1.
Then there is a bijection between the set  $\mathcal{TCL}(R,Q)$ of all Leibniz conformal extending structures of $R$ by $Q$ and $\mathcal{F}(R)=\mathcal{F}_1(R)\bigcup \mathcal{F}_2(R)$.
\end{pro}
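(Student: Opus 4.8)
The plan is to exploit that $Q=\mathbb{C}[\partial]x$ is free of rank one, so that any conformal bilinear map having $Q$ in a slot is completely determined by its value on the generator $x$. Concretely, for $a\in R$ I would set $a\rightharpoonup_\lambda x=h_\lambda(a,\partial)x$, $a\leftharpoonup_\lambda x=D_\lambda(a)$, $x\rhd_\lambda a=T_\lambda(a)$, $f_\lambda(x,x)=Q_0(\lambda,\partial)$, $\{x_\lambda x\}=P(\lambda,\partial)x$, and $x\lhd_\lambda a=\psi_\lambda(a,\partial)x$, and then recover all six maps on arbitrary arguments by sesquilinearity, e.g. $a\rightharpoonup_\lambda(p(\partial)x)=p(\partial+\lambda)h_\lambda(a,\partial)x$ and $f_\lambda(p(\partial)x,q(\partial)x)=p(-\lambda)q(\partial+\lambda)Q_0(\lambda,\partial)$. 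Sesquilinearity of each map in its $R$-slot then forces exactly the linearity types required by a flag datum: $h_\lambda(\cdot,\partial)$ and $D_\lambda$ become left conformal linear, while $T_\lambda$ becomes (right) conformal linear. This encodes $\Omega$ by $(h_\lambda(\cdot,\partial),D_\lambda,T_\lambda,Q_0,P)$ together with the extra coefficient $\psi$ coming from $\lhd_\lambda$.

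The next step is to produce the two kinds from condition $(L6)$. Substituting the generator and using sesquilinearity, $(L6)$ reads $\big(\psi_\mu(a,-\lambda-\mu)+h_\lambda(a,-\lambda-\mu)\big)\psi_{\lambda+\mu}(b,\partial)=0$ for all $a,b$. Hence either $\lhd_\lambda\equiv 0$, which I would take as the first kind, or $\psi_{\lambda+\mu}(b,\partial)\neq 0$ for some $b$, in which case dividing in $\mathbb{C}[\lambda,\mu,\partial]$ forces $\psi_\mu(a,\partial)=-h_{-\mu-\partial}(a,\partial)$ and in particular $h$ nonzero; this is the second kind. Thus the sixth datum $\psi$ is never independent -- it is either zero or read off from $h$ -- which is precisely why a flag datum records only five maps and why the second kind carries the standing hypothesis that $h$ be non-trivial.

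With $\Omega$ thus encoded, the heart of the proof is to verify that $(L1)$--$(L14)$ of Theorem~\ref{t1}, evaluated on generators, are equivalent to the flag datum conditions. I would run through them one at a time, replacing the six maps by $h,D,T,Q_0,P$ (and $\psi$) and pushing $\partial$ through the $\lambda$-arguments by sesquilinearity. In the first kind ($\lhd_\lambda=0$) the conditions $(L3)$, $(L6)$, $(L12)$ collapse to $0=0$, while $(L1)$ becomes \eqref{fg1}, $(L2)$ becomes \eqref{fg2}, $(L4)$ becomes \eqref{fg3}, $(L7)$ becomes \eqref{fg5}, $(L8)$ becomes \eqref{fg6}, $(L9)$ becomes \eqref{fg7}, $(L10)$ becomes \eqref{fg8}, $(L11)$ becomes \eqref{fg9}, $(L13)$ becomes \eqref{fg10}, $(L14)$ becomes \eqref{fg11}, and $(L5)$ -- using \eqref{fg3} -- becomes \eqref{fg4}; this matches $\mathcal{F}_1(R)$ exactly. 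In the second kind I would substitute $\psi_\mu(a,\partial)=-h_{-\mu-\partial}(a,\partial)$ throughout: the four $\lhd$-free conditions $(L1),(L2),(L8),(L13)$ still yield \eqref{fg1}, \eqref{fg2}, \eqref{fg6}, \eqref{fg10}, the remaining $\lhd$-conditions produce \eqref{fgg11} and \eqref{fg12}--\eqref{fg18}, and the conditions rendered redundant by \eqref{fg1} (such as $(L3)$) drop out, matching $\mathcal{F}_2(R)$.

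Finally, the assignment $\Omega\mapsto(h_\lambda(\cdot,\partial),D_\lambda,T_\lambda,Q_0,P)$ is injective because $\Omega$ is reconstructed from its generator values, and surjective because from any flag datum I can define the six maps by the generator formulas above -- taking $\lhd_\lambda=0$ for the first kind and $x\lhd_\lambda a=-h_{-\lambda-\partial}(a,\partial)x$ for the second -- extend by sesquilinearity, and invoke the equivalences just established to see that $(L1)$--$(L14)$ hold, so that $R\natural Q$ is a Leibniz conformal algebra. I expect the main obstacle to be computational stamina rather than conceptual difficulty: tracking the sesquilinearity substitutions $\partial\mapsto-\lambda$ and $\partial\mapsto\partial+\lambda$ correctly across fourteen identities, and -- especially in the second kind -- checking that after inserting $\psi=-h_{-\lambda-\partial}$ the surviving relations are exactly \eqref{fgg11} and \eqref{fg12}--\eqref{fg18} with no extra constraints, i.e. that the apparent additional conditions are genuine consequences of \eqref{fg1}.
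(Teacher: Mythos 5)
Your proposal is correct and follows essentially the same route as the paper: evaluate the six structure maps on the generator $x$, use conformal (sesqui)linearity to see that the data reduce to $(h_\lambda(\cdot,\partial),D_\lambda,T_\lambda,Q_0,P)$ plus the coefficient of $\lhd_\lambda$, derive the dichotomy from $(L6)$ (either $\lhd_\lambda=0$ or the coefficient equals $-h_{-\lambda-\partial}(a,\partial)$ with $h$ non-trivial), and match $(L1)$--$(L14)$ to the defining identities of $\mathcal{F}_1(R)$ and $\mathcal{F}_2(R)$ respectively. The only difference is that you spell out the condition-by-condition correspondence that the paper leaves as "easy to check."
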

\begin{proof}
Let $\Omega(R,Q)=(\leftharpoonup_\lambda,\rightharpoonup_\lambda,\lhd_\lambda,\rhd_\lambda,f_\lambda,
\{\cdot_\lambda\cdot\})$ be a Leibniz conformal extending structure. Since $Q=\mathbb{C}[\partial]x$ is a free
$\mathbb{C}[\partial]$-module of rank 1, we can set
\begin{eqnarray*}
& a\rightharpoonup_\lambda x=h_\lambda(a,\partial)x,&~~a\leftharpoonup_\lambda x=D_\lambda(a),\\
&x\lhd_\lambda a=g_\lambda(a,\partial)x,&~~x\rhd_\lambda a=T_\lambda(a),\\
&\{x_\lambda x\}=P(\lambda,\partial)x,&~~f_\lambda(x,x)=Q_0(\lambda,\partial),
\end{eqnarray*}
where $a\in V$, $P(\lambda,\partial)\in \mathbb{C}[\lambda,\partial]$, $Q_0(\lambda,\partial)\in R[\lambda]$, and $h_\lambda(\cdot,\partial)$, $g_\lambda(\cdot,\partial): R\rightarrow \mathbb{C}[\lambda,\partial]$ and $D_\lambda$, $T_\lambda: R\rightarrow R[\lambda]$ are linear maps.

Since $\leftharpoonup_\lambda$, $\rightharpoonup_\lambda$, $\lhd_\lambda$ and $\rhd_\lambda$ are conformal bilinear maps, we get $h_\lambda(\cdot,\partial)$ and $D_\lambda$ are left conformal linear maps, and $g_\lambda(\cdot,\partial)$ and $T_\lambda$
are two conformal linear maps. By $(L6)$, we get  $(g_\mu(a,-\lambda-\mu)+h_\lambda(a,-\lambda-\mu))g_{\lambda+\mu}(b,\partial)=0$. Therefore, there are two cases: $g_\lambda(\cdot,\partial)=0$ and $g_\mu(a,-\lambda-\mu)=-h_\lambda(a,-\lambda-\mu)$ with $g_\lambda(\cdot,\partial)$ not a trivial map. When $g_\lambda(\cdot,\partial)=0$, it is easy to check that $(L1)$-$(L14)$ hold if and only if $(\ref{fg1})$-$(\ref{fg11})$ are satisfied, i.e. $(h_\lambda(\cdot,\partial),$ $ D_\lambda,$
$T_\lambda, $ $Q_0(\lambda,\partial), $ $P(\lambda,\partial))\in \mathcal{F}_1(R)$. If $g_\mu(a,-\lambda-\mu)=-h_\lambda(a,-\lambda-\mu)$ when $g_\lambda(\cdot,\partial)$ is not a trivial map, it is also easy to get that $(L1)$-$(L14)$ hold if and only if (\ref{fg1}), (\ref{fg2}), (\ref{fg6}), (\ref{fg10}), (\ref{fgg11})-(\ref{fg18}) are satisfied, i.e. $(h_\lambda(\cdot,\partial),$ $ D_\lambda,$
$T_\lambda, $ $Q_0(\lambda,\partial), $ $P(\lambda,\partial))\in \mathcal{F}_2(R)$.
\end{proof}

Suppose that $(h_\lambda(\cdot,\partial), D_\lambda,
T_\lambda, Q_0(\lambda,\partial), P(\lambda,\partial))\in \mathcal{F}_1(R)$. By Proposition \ref{pr1}, the Leibniz conformal algebra corresponding
to it is the $\mathbb{C}[\partial]$-module
$R\oplus \mathbb{C}[\partial]x$ with the following $\lambda$-products
\begin{eqnarray}
&&[(a,0)_\lambda (b,0)]=([a_\lambda b],0),~~[(0,x)_\lambda(0,x)]=(Q_0(\lambda,\partial),P(\lambda,\partial)x),\\
&&[(a,0)_\lambda(0,x)]=(D_{\lambda}(a),h_{\lambda}(a,\partial)x),~~[(0,x)_\lambda(a,0)]
=(T_\lambda(a),0),
\end{eqnarray}
for any $a$, $b\in V$. We denote this Leibniz conformal algebra by $FC_1\big(R,\mathbb{C}[\partial]x\mid$ $h_\lambda(\cdot,\partial), $ $D_\lambda,$
$T_\lambda,$ $ Q_0(\lambda,\partial),$ $ P(\lambda,\partial)\big)$.

Similarly, if $(h_\lambda(\cdot,\partial), D_\lambda,
T_\lambda, Q_0(\lambda,\partial), P(\lambda,\partial))\in \mathcal{F}_2(R)$, then the corresponding Leibniz conformal algebra is the $\mathbb{C}[\partial]$-module
$R\oplus \mathbb{C}[\partial]x$ with the following $\lambda$-products
\begin{eqnarray}
&&[(a,0)_\lambda (b,0)]=([a_\lambda b],0),~~[(0,x)_\lambda(0,x)]=(Q_0(\lambda,\partial),P(\lambda,\partial)x),\\
&&[(a,0)_\lambda(0,x)]=(D_{\lambda}(a),h_{\lambda}(a,\partial)x),~~[(0,x)_\lambda(a,0)]
=(T_\lambda(a),-h_{-\lambda-\partial}(a,\partial)x),
\end{eqnarray}
for any $a$, $b\in V$. We denote this Leibniz conformal algebra by $FC_2\big(R,\mathbb{C}[\partial]x\mid$ $h_\lambda(\cdot,\partial), $ $D_\lambda,$
$T_\lambda,$ $ Q_0(\lambda,\partial),$ $ P(\lambda,\partial)\big)$.

\begin{thm}\label{t5}
Let $R=\mathbb{C}[\partial]V$ be a Leibniz conformal algebra and $Q=\mathbb{C}[\partial]x$ be a free $\mathbb{C}[\partial]$-module of rank 1. Set $E=R\oplus Q$ as a $\mathbb{C}[\partial]$-module. Then %we have
\begin{itemize}

\item[(1)]$
CExtd(E,R)\cong\mathcal{LH}_R^2(Q,R)\cong (\mathcal{F}_1(R)/\equiv_1)\cup (\mathcal{F}_2(R)/\equiv_2),
$
where $\equiv_1$ is the equivalence relation on %the set
$\mathcal{F}_1(R)$ as follows:
$(h_\lambda(\cdot,\partial), D_\lambda,
T_\lambda, Q_0(\lambda,\partial), P(\lambda,\partial))\equiv_1 (h_\lambda^{'}(\cdot,\partial),$ $D_\lambda^{'},$  $T_\lambda^{'},$ $Q_0^{'}(\lambda,\partial),$ $P^{'}(\lambda,\partial))$
if and only if
$h_\lambda(\cdot,\partial)=h_\lambda^{'}(\cdot,\partial)$ and there exist $u_0\in R$ and $\beta\in \mathbb{C}\backslash\{0\}$ such that (for all $a\in R$)
\begin{gather}
\label{tt1}D_\lambda(a)=[a_\lambda u_0]+\beta D_\lambda^{'}(a)-h_\lambda(a,\partial)u_0,\\
T_\lambda(a)=[{u_0}_\lambda a]+\beta T_\lambda^{'}(a),\\
P(\lambda,\partial)=h^{'}_\lambda(u_0,\partial)+\beta P^{'}(\lambda,\partial),\\
\label{tt2}Q_0(\lambda,\partial)=[{u_0}_\lambda u_0]+\beta D_\lambda^{'}(u_0)
+\beta T_\lambda^{'}(u_0)+\beta^2Q_0^{'}(\lambda,\partial)-P(\lambda,\partial)u_0,
\end{gather}
and
$\equiv_2$ is the equivalence relation on
$\mathcal{F}_2(R)$ as follows:
$(h_\lambda(\cdot,\partial), D_\lambda,
T_\lambda, Q_0(\lambda,\partial), P(\lambda,\partial))$ $\equiv_2 (h_\lambda^{'}(\cdot,\partial),$ $D_\lambda^{'},$  $T_\lambda^{'},$ $Q_0^{'}(\lambda,\partial),$ $P^{'}(\lambda,\partial))$
if and only if
$h_\lambda(\cdot,\partial)=h_\lambda^{'}(\cdot,\partial)$ and there exist $u_0\in R$ and $\beta\in \mathbb{C}\backslash\{0\}$ such that (for all $a\in R$)
\begin{gather}
\label{ttt1}D_\lambda(a)=[a_\lambda u_0]+\beta D_\lambda^{'}(a)-h_\lambda(a,\partial)u_0,\\
T_\lambda(a)=[{u_0}_\lambda a]+\beta T_\lambda^{'}(a)+h_{-\lambda-\partial}(a,\partial)u_0,\\
P(\lambda,\partial)=h^{'}_\lambda(u_0,\partial)-h_{-\lambda-\partial}^{'}(u_0,\partial)+\beta P^{'}(\lambda,\partial),\\
\label{ttt2}Q_0(\lambda,\partial)=[{u_0}_\lambda u_0]+\beta D_\lambda^{'}(u_0)
+\beta T_\lambda^{'}(u_0)+\beta^2Q_0^{'}(\lambda,\partial)-P(\lambda,\partial)u_0.
\end{gather}
The bijection between $(\mathcal{F}_1(R)/\equiv_1)\cup (\mathcal{F}_2(R)/\equiv_2)$ and
$CExtd(E,R)$ is given by
%\begin{eqnarray*}

$\overline{(h_\lambda(\cdot,\partial), D_\lambda,
T_\lambda, Q_0(\lambda,\partial), P(\lambda,\partial))}^{1}\mapsto FC_1(R,\mathbb{C}[\partial]x|h_\lambda(\cdot,\partial), D_\lambda,
T_\lambda, Q_0(\lambda,\partial), P(\lambda,\partial)),$
%\end{eqnarray*}
and

%\begin{eqnarray*}
$\overline{(h_\lambda(\cdot,\partial), D_\lambda,
T_\lambda, Q_0(\lambda,\partial), P(\lambda,\partial))}^{2}\mapsto FC_2(R,\mathbb{C}[\partial]x|h_\lambda(\cdot,\partial), D_\lambda,
T_\lambda, Q_0(\lambda,\partial), P(\lambda,\partial)),$
%\end{eqnarray*}
where $\overline{(h_\lambda(\cdot,\partial), D_\lambda,
T_\lambda, Q_0(\lambda,\partial), P(\lambda,\partial))}^{i}$ is the equivalence class via the relation $\equiv_i$ for $i\in \{1,2\}$.

\item[(2)] $CExtd^{'}(E,R)\cong\mathcal{LH}^2(Q,R)\cong (\mathcal{F}_1(R)/\approx_1)\cup (\mathcal{F}_2(R)/\approx_2)$,
where $\approx_i$ is the equivalence relation on the set
$\mathcal{F}_i(R)$ as follows:
$(h_\lambda(\cdot,\partial), D_\lambda,
T_\lambda, Q_0(\lambda,\partial), P(\lambda,\partial))\approx_1 (h_\lambda^{'}(\cdot,\partial),$ $D_\lambda^{'},$ $T_\lambda^{'},$ $Q_0^{'}(\lambda,\partial),$ $P^{'}(\lambda,\partial))$ if and only if
$h_\lambda(\cdot,\partial)=h_\lambda^{'}(\cdot,\partial)$, and there exists $u_0\in R$  such that (\ref{tt1})-(\ref{tt2}) hold for $\beta=1$, and $(h_\lambda(\cdot,\partial), D_\lambda,
T_\lambda, Q_0(\lambda,\partial), P(\lambda,\partial))\approx_2 (h_\lambda^{'}(\cdot,\partial),$ $D_\lambda^{'},$ $T_\lambda^{'},$ $Q_0^{'}(\lambda,\partial),$ $P^{'}(\lambda,\partial))$ if and only if
$h_\lambda(\cdot,\partial)=h_\lambda^{'}(\cdot,\partial)$, and there exists $u_0\in R$  such that (\ref{ttt1})-(\ref{ttt2}) hold for $\beta=1$.
The bijection between $(\mathcal{F}_1(R)/\approx_1)\cup (\mathcal{F}_2(R)/\approx_2)$ and
$CExtd^{'}(E,R)$ is given by

%\begin{eqnarray*}
$\overline{\overline{(h_\lambda(\cdot,\partial), D_\lambda,
T_\lambda, Q_0(\lambda,\partial), P(\lambda,\partial))}}^{1}\mapsto
FC_1(R,\mathbb{C}[\partial]x|h_\lambda(\cdot,\partial), D_\lambda,
T_\lambda, Q_0(\lambda,\partial), P(\lambda,\partial)),$
%\end{eqnarray*}
and

%\begin{eqnarray*}
$\overline{\overline{(h_\lambda(\cdot,\partial), D_\lambda,
T_\lambda, Q_0(\lambda,\partial), P(\lambda,\partial))}}^{2}\mapsto
FC_2(R,\mathbb{C}[\partial]x|h_\lambda(\cdot,\partial), D_\lambda,
T_\lambda, Q_0(\lambda,\partial), P(\lambda,\partial)),$
%\end{eqnarray*}
where $\overline{(h_\lambda(\cdot,\partial), D_\lambda,
T_\lambda, Q_0(\lambda,\partial), P(\lambda,\partial))}^{i}$ is the equivalence class via the relation $\approx_i$ for $i\in \{1,2\}$.
\end{itemize}
\end{thm}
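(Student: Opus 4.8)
The plan is to assemble the two chains of bijections in the statement from results already established and then to make the equivalence relations explicit. By Theorem \ref{t4} the assignment $\overline{\Omega(R,Q)}\mapsto (R\natural Q,[\cdot_\lambda\cdot])$ furnishes bijections $\mathcal{LH}_R^2(Q,R)\cong CExtd(E,R)$ and $\mathcal{LH}^2(Q,R)\cong CExtd^{'}(E,R)$, where by definition $\mathcal{LH}_R^2(Q,R)=\mathcal{TCL}(R,Q)/\equiv$ and $\mathcal{LH}^2(Q,R)=\mathcal{TCL}(R,Q)/\approx$. By Proposition \ref{pr1} there is a bijection $\mathcal{TCL}(R,Q)\cong\mathcal{F}_1(R)\cup\mathcal{F}_2(R)$, sending a structure with $\lhd_\lambda$ trivial into $\mathcal{F}_1(R)$ and one with $\lhd_\lambda$ nontrivial (so that $g_\mu(a,-\lambda-\mu)=-h_\lambda(a,-\lambda-\mu)$) into $\mathcal{F}_2(R)$. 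It therefore suffices to transport $\equiv$ and $\approx$ along this bijection and to verify that they descend to exactly $\equiv_i$ and $\approx_i$ on each $\mathcal{F}_i(R)$.

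First I would pin down the data $(u,v)$ of Definition \ref{DD1} in the rank-one situation. A $\mathbb{C}[\partial]$-module homomorphism $u:Q=\mathbb{C}[\partial]x\rightarrow R$ is determined by the single element $u_0:=u(x)\in R$ through $u(p(\partial)x)=p(\partial)u_0$. An automorphism $v\in\text{Aut}_{\mathbb{C}[\partial]}(\mathbb{C}[\partial]x)$ must carry the generator $x$ to another generator, i.e. $v(x)=\beta x$ with $\beta$ a unit of $\mathbb{C}[\partial]$; since the units of $\mathbb{C}[\partial]$ are precisely the nonzero constants, $\beta\in\mathbb{C}\setminus\{0\}$ and $v^{-1}(q(\partial)x)=\beta^{-1}q(\partial)x$. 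This identification $\text{Aut}_{\mathbb{C}[\partial]}(\mathbb{C}[\partial]x)\cong\mathbb{C}\setminus\{0\}$ is the origin of the scalar $\beta$ appearing in $\equiv_1$ and $\equiv_2$.

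Next I would substitute $v(x)=\beta x$ and $u(x)=u_0$ into the transformation formulas \eqref{g1}--\eqref{g6} of Definition \ref{DD1}, using the parametrization of Proposition \ref{pr1}: $a\rightharpoonup_\lambda x=h_\lambda(a,\partial)x$, $a\leftharpoonup_\lambda x=D_\lambda(a)$, $x\lhd_\lambda a=g_\lambda(a,\partial)x$, $x\rhd_\lambda a=T_\lambda(a)$, $\{x_\lambda x\}=P(\lambda,\partial)x$ and $f_\lambda(x,x)=Q_0(\lambda,\partial)$. Since $v^{-1}$ acts as multiplication by $\beta^{-1}$, the factor $\beta$ in \eqref{g1} cancels and that identity collapses to $h_\lambda(\cdot,\partial)=h_\lambda^{'}(\cdot,\partial)$; the analogous computation for the $\lhd_\lambda$-identity gives $g_\lambda(\cdot,\partial)=g_\lambda^{'}(\cdot,\partial)$. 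Hence the relation preserves both $h$ and $g$ and so cannot mix $\mathcal{F}_1(R)$ with $\mathcal{F}_2(R)$, which is exactly what forces the disjoint-union form of the answer. Evaluating the $\leftharpoonup_\lambda$-identity, the $\rhd_\lambda$-identity, the $\{\cdot_\lambda\cdot\}$-identity and \eqref{g6} at $y=x$ then reproduces precisely \eqref{tt1}--\eqref{tt2} on $\mathcal{F}_1(R)$ (where $g=0$ annihilates the terms $u(x\lhd_\lambda a)$ and $v(x)\lhd_\lambda^{'}u(x)$) and, in the remaining case $g_\lambda(a,\partial)=-h_{-\lambda-\partial}(a,\partial)$, exactly \eqref{ttt1}--\eqref{ttt2} on $\mathcal{F}_2(R)$, the surviving $g$-contributions producing the extra summands $h_{-\lambda-\partial}(a,\partial)u_0$ in the formula for $T_\lambda$ and $-h_{-\lambda-\partial}^{'}(u_0,\partial)$ in the formula for $P$.

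Finally, by Lemma \ref{l1} we have $R\natural Q\equiv R\natural^{'}Q$ if and only if $\Omega(R,Q)\equiv\Omega^{'}(R,Q)$, so the relations computed above are exactly the transported $\equiv$, and part (1) follows with each class mapped to its $FC_1$ or $FC_2$ model. For part (2), Lemma \ref{l1} identifies $\approx$ with the subcase $v=Id_Q$, that is $\beta=1$; imposing $\beta=1$ in \eqref{tt1}--\eqref{tt2} and \eqref{ttt1}--\eqref{ttt2} yields $\approx_1$ and $\approx_2$, and the composite bijection again sends classes to the same $FC_1$ and $FC_2$ models. I expect the only steps that are not purely mechanical to be the identification $\text{Aut}_{\mathbb{C}[\partial]}(\mathbb{C}[\partial]x)\cong\mathbb{C}\setminus\{0\}$ and the observation that $\equiv$ respects the $\mathcal{F}_1/\mathcal{F}_2$ dichotomy; once these are in place the remainder is substitution into \eqref{g1}--\eqref{g6} and bookkeeping.
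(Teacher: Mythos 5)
Your proposal is correct and follows exactly the route the paper takes: the paper's own proof is a one-line reduction to Lemma \ref{l1}, Theorem \ref{t4} and Proposition \ref{pr1} after setting $u(x)=u_0$ and $v(x)=\beta x$, and your substitution of these into \eqref{g1}--\eqref{g6} (including the observation that $g=g'$ keeps $\mathcal{F}_1(R)$ and $\mathcal{F}_2(R)$ separate) simply spells out the details the paper leaves implicit.
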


\begin{proof}
Since %in Lemma \ref{l1},
$u: Q\rightarrow R$ is a $\mathbb{C}[\partial]$-module
homomorphism and $v\in \text{Aut}_{\mathbb{C}[\partial]}(Q)$,
we set $u(x)=u_0$ and $v(x)=\beta x$ where $u_0\in R$ and $\beta\in \mathbb{C}\backslash\{0\}$. Then this theorem can be directly obtained from Lemma \ref{l1}, Theorem \ref{t4} and
Proposition \ref{pr1}.
\end{proof}

\section{Special cases of unified products and examples}

In this section, three special cases of unified products, namely, twisted products, crossed products and bicrossed products are given, and explicit examples are presented.

\subsection{Twisted products}

Let $\Omega(R, Q)=(\leftharpoonup_\lambda, \rightharpoonup_\lambda, \lhd_\lambda, \rhd_\lambda, $$ f_\lambda, $ $\{\cdot_\lambda \cdot\})$ be an extending datum of a Leibniz conformal algebra $R$ by a $\mathbb{C}[\partial]$-module $Q$ with $\leftharpoonup_\lambda$, $\rightharpoonup_\lambda$, $\lhd_\lambda$, and $\rhd_\lambda$ trivial. We  denote this extending structure simply by $\Omega(R, Q)=(f_\lambda, \{\cdot_\lambda \cdot\})$. By Theorem \ref{t1}, $\Omega(R, Q)=(f_\lambda, \{\cdot_\lambda \cdot\})$ is a
Leibniz extending structure if and only if $(Q,\{\cdot_\lambda \cdot\})$ is a Leibniz conformal algebra and
$f_\lambda$ satisfies
\begin{eqnarray}
\label{ts1}[a_\lambda f_\mu(x,y)]=0,~~~~[{f_\lambda(x,y)}_{\lambda+\mu}a]=0,~~~~~~\\
\label{ts2}f_{\lambda+\mu}(\{x_\lambda y\},z)+f_\mu(y,\{x_\lambda z\})-f_\lambda(x,\{y_\mu z\})=0.
\end{eqnarray}
We denote the corresponding unified product by $R\natural_f Q$ and call it the \emph{twisted product} of $R$ and $Q$. The $\lambda$-product on $R\natural_f Q$ is given by
\begin{eqnarray}
[(a,x)_\lambda (b,y)]=([a_\lambda b]+f_\lambda(x,y),\{x_\lambda y\}), \ \ \forall \ a,b\in R, x, y\in Q.
\end{eqnarray}

According to (\ref{ts1}) and (\ref{ts2}), it is easy to get the following.% proposition.
\begin{pro}
Let $R$ be a Leibniz conformal algebra with the trivial left-center or trivial right-center. Then for any Leibniz conformal algebra
$Q$, the twisted product $R\natural_f Q$ is trivial.
\end{pro}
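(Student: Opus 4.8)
The plan is to extract from the two constraints in \eqref{ts1} that every value of $f$ lies simultaneously in the left-center and the right-center of $R$; triviality of either center then forces $f_\lambda=0$. Recall first that the twisted product is called \emph{trivial} exactly when $f_\lambda=0$, in which case its $\lambda$-product reduces to $[(a,x)_\lambda(b,y)]=([a_\lambda b],\{x_\lambda y\})$, that is, the direct product of the Leibniz conformal algebras $R$ and $Q$.

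The key step is coefficient extraction. Fix $x,y\in Q$ and expand $f_\lambda(x,y)=\sum_k c_k\lambda^k$ with $c_k\in R$. Reading the first identity of \eqref{ts1} with conformal parameter $\mu$, we have $[a_\lambda f_\mu(x,y)]=\sum_k\mu^k[a_\lambda c_k]=0$ for all $a\in R$; since $\lambda$ and $\mu$ are algebraically independent conformal variables, equating the coefficient of each $\mu^k$ gives $[a_\lambda c_k]=0$ for all $a$ and all $\lambda$, so $c_k$ is right-central. Dually, the second identity reads $[{f_\lambda(x,y)}_{\lambda+\mu}a]=\sum_k\lambda^k[{c_k}_{\lambda+\mu}a]=0$; writing $\nu=\lambda+\mu$ and using that $\lambda$ and $\nu$ are independent as $\mu$ ranges freely, the coefficient of each $\lambda^k$ yields $[{c_k}_\nu a]=0$ for all $\nu$ and all $a$, so $c_k$ is left-central.

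To conclude I invoke the hypothesis on $R$. If the left-center of $R$ is trivial, then every $c_k=0$; if instead the right-center is trivial, then again every $c_k=0$. In either case $f_\lambda(x,y)=0$, and since $x,y\in Q$ were arbitrary, $f_\lambda\equiv 0$. Hence $R\natural_f Q$ is the direct product, i.e. it is trivial.

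I expect no genuine obstacle in this argument: the whole content is the observation that \eqref{ts1} forces $f$ into both centers, and the only point deserving a word of care is the algebraic independence of the conformal parameters $\lambda$ and $\mu$ (equivalently of $\lambda$ and $\lambda+\mu$), which is what legitimizes comparing coefficients to pass from the bracket identities to the centrality of each $c_k$. Observe also that neither \eqref{ts2} nor the Leibniz axiom on $(Q,\{\cdot_\lambda\cdot\})$ enters, since those relations constrain only the bracket $\{\cdot_\lambda\cdot\}$ and not $f$.
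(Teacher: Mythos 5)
Your proof is correct and follows essentially the same route as the paper, which simply observes that \eqref{ts1} forces the coefficients of $f_\lambda(x,y)$ to lie in both the left-center and the right-center of $R$, so triviality of either center kills $f$. Your added care about extracting coefficients in the independent parameters $\lambda$ and $\mu$ (or $\lambda$ and $\lambda+\mu$), and your remark that \eqref{ts2} plays no role, are both accurate refinements of the paper's terse ``it is easy to get'' justification.
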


\subsection{Crossed products}

Let $\Omega(R, Q)=(\leftharpoonup_\lambda, \rightharpoonup_\lambda, \lhd_\lambda, \rhd_\lambda, $$ f_\lambda, $ $\{\cdot_\lambda \cdot\})$ be an extending datum of a Leibniz conformal algebra $R$ by a $\mathbb{C}[\partial]$-module $Q$ with  $\rightharpoonup_\lambda$ and $\lhd_\lambda$ trivial. We denote this extending structure simply by $\Omega(R, Q)=(\leftharpoonup_\lambda, \rhd_\lambda, f_\lambda,  \{\cdot_\lambda \cdot\})$. By Theorem \ref{t1}, $\Omega(R, Q)=(\leftharpoonup_\lambda, \rhd_\lambda, f_\lambda,  \{\cdot_\lambda \cdot\})$ is a
Leibniz extending structure if and only if $(Q,\{\cdot_\lambda \cdot\})$ is a Leibniz conformal algebra and the following hold
%they satisfy
\begin{eqnarray*}
(C1)&&~~[a_\lambda b]\leftharpoonup_{\lambda+\mu}x=[a_\lambda(b\leftharpoonup_\mu x)]-[b_\mu(a\leftharpoonup_\lambda x)],\\
(C2) &&~~x\rhd_\mu [a_\lambda b]=[a_\lambda (x\rhd_\mu b)]-[(a\leftharpoonup_\lambda x)_{\lambda+\mu}b],\\
(C3) &&~~[(x\rhd_\mu a)_{\lambda+\mu}b]+[(a\leftharpoonup_\lambda x)_{\lambda+\mu}b]=0,\\
(C4)&&~~a\leftharpoonup_\lambda\{x_\mu y\}=(a\leftharpoonup_\lambda x)\leftharpoonup_{\lambda+\mu}y+x\rhd_\mu(a\leftharpoonup_\lambda y)
-[a_\lambda f_\mu(x,y)],\\
(C5)&&~~(x\rhd_\mu a)\leftharpoonup_{\lambda+\mu} y+(a\leftharpoonup_\lambda x)\leftharpoonup_{\lambda+\mu}y=0,\\
(C6)&&~~\{x_\lambda y\}\rhd_{\lambda+\mu}a=x\rhd_\lambda(y\rhd_\mu a)
-[f_\lambda(x,y)_{\lambda+\mu}a]-y\rhd_\mu(x\rhd_\lambda a),\\
(C7)&&~~x\rhd_\lambda f_\mu(y,z)=f_\lambda(x,y)\leftharpoonup_{\lambda+\mu}z+f_{\lambda+\mu}(\{x_\lambda y\},z)
+y\rhd_\mu f_\lambda(x,z)\\
&&+f_\mu(y,\{x_\lambda z\})-f_\lambda(x,\{y_\mu z\}).\nonumber
\end{eqnarray*}
The unified product associated with $\Omega(R, Q)=(\leftharpoonup_\lambda, \rhd_\lambda, f_\lambda,  \{\cdot_\lambda \cdot\})$ is denoted by $R\natural_{\rhd,\leftharpoonup}^{f}Q$ and called
the \emph{crossed product} of $R$ and $Q$. The $\lambda$-product on $R\natural_{\rhd,\leftharpoonup}^{f}Q$ is given by
\begin{eqnarray}
[(a,x)_\lambda (b,y)]=([a_\lambda b]+x\rhd_\lambda b+a\leftharpoonup_\lambda y+f_\lambda(x,y),\{x_\lambda y\}),
\end{eqnarray}
for any $a$, $b\in R$ and $x$, $y\in Q$. Obviously, $R$ is a bi-sided ideal of $R\natural_{\rhd,\leftharpoonup}^{f}Q$.

By Theorem \ref{t2}, we have
\begin{pro}
Let $R$ be a Leibniz conformal algebra and $Q$ a $\mathbb{C}[\partial]$-module. Set
$E=R\oplus Q$ where the direct sum is the sum of $\mathbb{C}[\partial]$-modules. Suppose
that $E$ is  a Leibniz conformal algebra such that $R$ is a bi-sided ideal of $E$. Then $E$ is isomorphic to
a crossed product $R\natural_{\rhd,\leftharpoonup}^{f}Q$.
\end{pro}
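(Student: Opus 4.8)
The plan is to apply Theorem~\ref{t2} directly and then use the ideal hypothesis to recognize the resulting extending datum as one of crossed-product type. Since $R$ is a bi-sided ideal of $E$, it is in particular a Leibniz conformal subalgebra, so Theorem~\ref{t2} applies: there exist a Leibniz conformal extending structure $\Omega(R,Q)=(\leftharpoonup_\lambda,\rightharpoonup_\lambda,\lhd_\lambda,\rhd_\lambda,f_\lambda,\{\cdot_\lambda\cdot\})$ and an isomorphism $E\approx R\natural Q$, where the six maps are built from the natural projection $p\colon E\rightarrow R$ by the formulas displayed in the proof of Theorem~\ref{t2}.

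The key step is to observe that the two module maps $\rightharpoonup_\lambda$ and $\lhd_\lambda$ are forced to be trivial. Indeed, since $R$ is a left ideal, $[a_\lambda x]\in R$ for all $a\in R$ and $x\in Q$, so that $p([a_\lambda x])=[a_\lambda x]$ and hence $a\rightharpoonup_\lambda x=[a_\lambda x]-p([a_\lambda x])=0$. Dually, since $R$ is a right ideal, $[x_\lambda a]\in R$, whence $x\lhd_\lambda a=[x_\lambda a]-p([x_\lambda a])=0$. Therefore $\rightharpoonup_\lambda=\lhd_\lambda=0$, and the remaining data reduce to a quadruple $(\leftharpoonup_\lambda,\rhd_\lambda,f_\lambda,\{\cdot_\lambda\cdot\})$.

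An extending datum with $\rightharpoonup_\lambda$ and $\lhd_\lambda$ trivial is, by definition, a crossed-product datum, and its associated unified product is precisely the crossed product $R\natural_{\rhd,\leftharpoonup}^{f}Q$. Thus the isomorphism $E\approx R\natural Q$ exhibits $E$ as isomorphic to a crossed product, which is the assertion. The compatibility conditions $(C1)$--$(C7)$ require no separate verification: because $\Omega(R,Q)$ is already a Leibniz conformal extending structure by Theorem~\ref{t2}, the full system $(L1)$--$(L14)$ holds, and specializing $\rightharpoonup_\lambda=\lhd_\lambda=0$ in $(L1)$--$(L14)$ collapses them exactly onto $(C1)$--$(C7)$ (the relations $(L1),(L3),(L6)$ becoming vacuous and the rest losing their $\rightharpoonup_\lambda,\lhd_\lambda$ terms).

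There is essentially no obstacle here: the entire content is the elementary observation that the bi-sided ideal condition kills the $Q$-valued components of $[a_\lambda x]$ and $[x_\lambda a]$, which is exactly what trivializes $\rightharpoonup_\lambda$ and $\lhd_\lambda$. The only point deserving (routine) care is the bookkeeping in the previous paragraph confirming that the surviving maps satisfy the crossed-product axioms, and this is immediate from the structure of $(L1)$--$(L14)$.
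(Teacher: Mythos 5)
Your proposal is correct and follows the same route as the paper, which simply invokes Theorem~\ref{t2} and implicitly relies on the observation that the bi-sided ideal condition forces $[a_\lambda x],[x_\lambda a]\in R[\lambda]$, so that $a\rightharpoonup_\lambda x$ and $x\lhd_\lambda a$ vanish and the extending datum is of crossed-product type. Your write-up just makes explicit the bookkeeping the paper leaves to the reader.
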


 We have shown that unified product is an effective tool to solve the $\mathbb{C}[\partial]$-split extending structures problem.
As a special case of unified product, crossed product can be used to answer the following problem which is also a special case of $\mathbb{C}[\partial]$-split extending structures problem.

{\noindent\bf Problem 1.} \emph{Given two Leibniz conformal algebras $R$ and $Q$. Set $E = R\oplus Q $ where
the direct sum is the sum of $\mathbb{C}[\partial]$-modules. Describe and classify all Leibniz conformal
algebra structures on $E$ such that $R$ is a bi-sided ideal of $E$ up to isomorphisms which stablize $R$.}

By the discussions in Section 3, this problem can be solved by the cohomological type object $\mathcal{LH}_R^2(Q,R)=\mathfrak{TCL}(R,Q)/\equiv$, where $\lhd_\lambda$ and $\rightharpoonup_\lambda$ are trivial in these Leibniz conformal extending structures. For convenience, we denote this
cohomological type object by $\mathcal{LHC}_R^2(Q,R)$. In particular, $\mathcal{LH}^2(Q,R)=\mathfrak{TCL}(R,Q)/\approx$ with $\lhd_\lambda$ and $\rightharpoonup_\lambda$ trivial provides a theoretical answer to the $\mathbb{C}[\partial]$-split extension problem. This
cohomological type object is denoted by $\mathcal{LHC}^2(Q,R)$.

Let $R=\mathbb{C}[\partial]V$ and $Q=\mathbb{C}[\partial]x$. By Theorem \ref{t5},
$\mathcal{LHC}_R^2(Q,R)$ and $\mathcal{LHC}^2(Q,R)$ can be characterized by flag datums of $R$ with
$h_\lambda(\cdot,\partial)=g_\lambda(\cdot,\partial)=0$. In this case, the flag datums $\mathcal{F}_1(R)$ and
$\mathcal{F}_2(R)$ are the same. For convenience, we denote by $\mathcal{FC}(R)$ the set of flag datums
$(D_\lambda, T_\lambda, Q_0(\lambda,\partial), P(\lambda,\partial))$ of $R$ which satisfy (\ref{fg9}),
(\ref{fg10}) and
\begin{eqnarray}
\label{fc1}D_{\lambda+\mu}([a_\lambda b])=[a_\lambda D_\mu(b)]
-[b_\mu D_\lambda(a)],\\
\label{fc2}T_\mu([a_\lambda b])=[a_\lambda T_\mu(b)]-[D_\lambda(a)_{\lambda+\mu}b],\\
\label{fc3}T_\mu([a_\lambda b])=[a_\lambda T_\mu(b)]+[{T_\mu(a)}_{\lambda+\mu}b],\\
\label{fc4}P(\mu,\lambda+\partial)D_\lambda(a)=D_{\lambda+\mu}(D_\lambda(a))
+T_\mu(D_\lambda(a))-[a_\lambda Q_0(\mu,\partial)],\\
\label{fc5}D_{\lambda+\mu}(T_\mu(a))+D_{\lambda+\mu}(D_\lambda(a))=0,\\
\label{fc6}P(\lambda,-\lambda-\mu)P(\lambda+\mu,\partial)+P(\lambda,\mu+\partial)P(\mu,\partial)-P(\mu,\lambda+\partial)P(\lambda,\partial)=0.
\end{eqnarray}
Moreover, $(D_\lambda,
T_\lambda, Q_0(\lambda,\partial), P(\lambda,\partial))$ $\equiv$ ( $D_\lambda^{'},$  $T_\lambda^{'},$ $Q_0^{'}(\lambda,\partial),$ $P^{'}(\lambda,\partial))$
if and only if
there exist $u_0\in R$ and $\beta\in \mathbb{C}\backslash\{0\}$ such that (for all $a\in R$)
\begin{gather}
\label{cf1}D_\lambda(a)=[a_\lambda u_0]+\beta D_\lambda^{'}(a),\\
\label{cf2}T_\lambda(a)=[{u_0}_\lambda a]+\beta T_\lambda^{'}(a),\\
\label{cf3}P(\lambda,\partial)=\beta P^{'}(\lambda,\partial),\\
\label{cf4}Q_0(\lambda,\partial)=[{u_0}_\lambda u_0]+\beta D_\lambda^{'}(u_0)
+\beta T_\lambda^{'}(u_0)+\beta^2Q_0^{'}(\lambda,\partial)-P(\lambda,\partial)u_0.
\end{gather}
Whereas $(D_\lambda,
T_\lambda, Q_0(\lambda,\partial), P(\lambda,\partial))$ $\approx$ ( $D_\lambda^{'},$  $T_\lambda^{'},$ $Q_0^{'}(\lambda,\partial),$ $P^{'}(\lambda,\partial))$
if and only if (\ref{cf1})-(\ref{cf4}) hold with $\beta=1$. Note that in a flag datum $(D_\lambda,
T_\lambda, Q_0(\lambda,\partial), P(\lambda,\partial))$, $T_\lambda$ is a conformal derivation and
$D_\lambda$ is a conformal anti-derivation.

\begin{lem}\label{s5l1}
Suppose that $R$ is a Leibniz conformal algebra with the trivial left-center. Then for any flag datum
$(D_\lambda,
T_\lambda, Q_0(\lambda,\partial), P(\lambda,\partial))$, $D_{-\lambda-\partial}=-T_\lambda$.
\end{lem}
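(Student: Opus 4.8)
The plan is to exploit the triviality of the left-center. To establish the operator identity $D_{-\lambda-\partial}=-T_\lambda$ it suffices to show that for every $a\in R$ the element $D_{-\lambda-\partial}(a)+T_\lambda(a)$, a polynomial in $\lambda$ with coefficients in $R$, lies in the left-center; that is, $[(D_{-\lambda-\partial}(a)+T_\lambda(a))_\mu b]=0$ for all $b\in R$. Comparing coefficients of each power of $\lambda$ then exhibits every coefficient as a left-central element, and the hypothesis forces it to vanish.

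First I would extract the needed bracket relation by comparing the two defining conditions of a flag datum in $\mathcal{FC}(R)$ that both compute $T_\mu([a_\lambda b])$. Subtracting \eqref{fc2} from \eqref{fc3}, the terms $T_\mu([a_\lambda b])$ and $[a_\lambda T_\mu(b)]$ cancel, leaving
\begin{equation*}
[(D_\lambda(a))_{\lambda+\mu}b]+[(T_\mu(a))_{\lambda+\mu}b]=0 \qquad (a,b\in R).
\end{equation*}
This is the only input from the flag-datum axioms that I expect to need.

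Next comes the technical heart: rewriting the first summand after the substitution $\lambda\mapsto-\lambda-\partial$. Writing $D_\sigma(a)=\sum_i\sigma^i d_i(a)$, the symbol $D_{-\lambda-\partial}(a)$ means $\sum_i(-\lambda-\partial)^i d_i(a)$ with $\partial$ acting on the coefficients $d_i(a)$. Applying $[(\cdot)_\mu b]$ and using conformal sesquilinearity $[(\partial u)_\mu b]=-\mu[u_\mu b]$ to turn each output $\partial$ into the scalar $-\mu$, the operator $-\lambda-\partial$ collapses to $-\lambda+\mu=\mu-\lambda$, so that
\begin{equation*}
[(D_{-\lambda-\partial}(a))_\mu b]=[(D_{\mu-\lambda}(a))_\mu b].
\end{equation*}
Specializing the displayed relation above at the indices $(\mu-\lambda,\lambda)$ in place of $(\lambda,\mu)$ — so that the inner index becomes $\mu-\lambda$ and the total bracket index becomes $(\mu-\lambda)+\lambda=\mu$ — yields $[(D_{\mu-\lambda}(a))_\mu b]+[(T_\lambda(a))_\mu b]=0$. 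Combining the last two displays gives $[(D_{-\lambda-\partial}(a)+T_\lambda(a))_\mu b]=0$ for all $b$ and all $\mu$, and triviality of the left-center finishes the argument.

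I expect the main obstacle to be purely bookkeeping: getting the sesquilinearity substitution exactly right, namely that $-\lambda-\partial$ collapses to $\mu-\lambda$ rather than $\lambda-\mu$ or $-\lambda-\mu$, and then matching this with the correct specialization of the relation coming from \eqref{fc2} and \eqref{fc3}. Once the two index patterns are aligned, no further flag-datum axioms are required, and the passage from ``the brackets vanish for all $b$'' to ``$D_{-\lambda-\partial}(a)+T_\lambda(a)=0$'' is immediate from the hypothesis that the left-center of $R$ is trivial.
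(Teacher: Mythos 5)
Your proposal is correct and follows essentially the same route as the paper: subtract \eqref{fc2} from \eqref{fc3} to get $[D_\lambda(a)_{\lambda+\mu}b]+[T_\mu(a)_{\lambda+\mu}b]=0$, use conformal sesquilinearity to trade the index of $D$ for $-\lambda-\partial$ inside the bracket, and invoke triviality of the left-center. The paper performs the substitution in the opposite direction (rewriting $D_\lambda(a)$ as $D_{-\mu-\partial}(a)$ inside $[\cdot_{\lambda+\mu}b]$ rather than expanding $D_{-\lambda-\partial}(a)$ inside $[\cdot_\mu b]$), but this is only a cosmetic difference in bookkeeping.
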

\begin{proof}
By (\ref{fc2}) and (\ref{fc3}), we get $[D_\lambda(a)_{\lambda+\mu}b]+[{T_\mu(a)}_{\lambda+\mu}b]=0$.
Therefore, $[(D_{-\mu-\partial}(a)+T_\mu(a))_{\lambda+\mu}b]=0$. Since $R$ has the trivial left-center,
one has $D_{-\mu-\partial}(a)=-T_\mu(a)$ for any $a\in R$. Thus, $D_{-\lambda-\partial}=-T_\lambda$.
\end{proof}

\begin{pro}\label{sec5p1}
Let $R=\mathbb{C}[\partial]V$ be a Leibniz conformal algebra with the trivial left-center
and $Q=\mathbb{C}[\partial]x$. Then flag datums $(D_\lambda,
T_\lambda, Q_0(\lambda,\partial), P(\lambda,\partial))\equiv (D_\lambda^{'},
T_\lambda^{'}, Q_0^{'}(\lambda,\partial), P^{'}(\lambda,\partial))$ if and only if there exist $u_0\in R$ and $\beta\in \mathbb{C}\backslash\{0\}$ such that (\ref{cf2}), (\ref{cf3}) and the following hold
\begin{eqnarray}
\label{cf5} [a_\lambda u_0]=-[{u_0}_{-\lambda-\partial}a], \ \ \forall \ a\in R.
\end{eqnarray}
%for any $a\in R$.

Moreover, flag datums $(D_\lambda,
T_\lambda, Q_0(\lambda,\partial), P(\lambda,\partial))\approx(D_\lambda^{'},
T_\lambda^{'}, Q_0^{'}(\lambda,\partial), P^{'}(\lambda,\partial))$ if and only if there exists $u_0\in R$  such that (\ref{cf2}), (\ref{cf3}) with $\beta =1$ and (\ref{cf5}) are satisfied.
\end{pro}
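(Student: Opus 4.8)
The plan is to start from the general equivalence relations $\equiv$ and $\approx$ on $\mathcal{FC}(R)$ given by (\ref{cf1})--(\ref{cf4}), and simplify them using the extra structure coming from the triviality of the left-center, which is captured by Lemma \ref{s5l1}. The key observation is that conditions (\ref{cf2}) and (\ref{cf3}) should survive unchanged, while (\ref{cf1}) and (\ref{cf4}) should become redundant once we impose the symmetry condition (\ref{cf5}). So the whole statement reduces to showing that, under the hypothesis of trivial left-center, the full system (\ref{cf1})--(\ref{cf4}) is equivalent to the reduced system consisting of (\ref{cf2}), (\ref{cf3}) and (\ref{cf5}).

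First I would handle the forward direction. Assume $(D_\lambda,T_\lambda,Q_0,P)\equiv(D_\lambda',T_\lambda',Q_0',P')$, so (\ref{cf1})--(\ref{cf4}) hold for some $u_0\in R$ and $\beta\neq 0$. Conditions (\ref{cf2}) and (\ref{cf3}) are immediate. For (\ref{cf5}), I would apply Lemma \ref{s5l1} to \emph{both} flag datums: it gives $D_{-\lambda-\partial}=-T_\lambda$ and $D_{-\lambda-\partial}'=-T_\lambda'$. Substituting $a\mapsto$ the relevant argument and comparing (\ref{cf1}) evaluated at spectral parameter $-\lambda-\partial$ against (\ref{cf2}) — using the sesquilinearity/left-conformal-linearity of $D_\lambda$ and $T_\lambda$ — forces the inner-derivation/anti-derivation terms built from $u_0$ to satisfy $[a_\lambda u_0]=-[{u_0}_{-\lambda-\partial}a]$, which is exactly (\ref{cf5}). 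This is the technical heart of the argument: I expect to apply Lemma \ref{s5l1} to the two flag datums and subtract, cancelling the $\beta D_\lambda'$ versus $\beta T_\lambda'$ contributions via $D_{-\lambda-\partial}'=-T_\lambda'$, leaving only the $u_0$-terms, whose equality is (\ref{cf5}).

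For the converse I would assume (\ref{cf2}), (\ref{cf3}) and (\ref{cf5}) and reconstruct (\ref{cf1}) and (\ref{cf4}). Given (\ref{cf5}), the map $a\mapsto [a_\lambda u_0]$ and $a\mapsto [{u_0}_{-\lambda-\partial}a]$ are negatives of each other, so combining this with Lemma \ref{s5l1} applied to the primed datum lets me rewrite (\ref{cf2}) as (\ref{cf1}) (evaluated at the appropriate parameter), i.e.\ (\ref{cf1}) is not an independent constraint but follows. The remaining relation (\ref{cf4}), which defines $Q_0$ in terms of $Q_0'$, $u_0$ and the other data, should then be regarded as simply \emph{defining} $Q_0$; since it involves no new obstruction once $D_\lambda$, $T_\lambda$, $P$ are fixed by (\ref{cf1})--(\ref{cf3}), I would argue it can always be satisfied and hence imposes no extra condition beyond what is already listed. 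The statement for $\approx$ is the identical argument specialized to $\beta=1$, so I would simply remark that the proof goes through verbatim with $\beta=1$ throughout.

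The main obstacle I anticipate is the careful bookkeeping of the spectral variables in the forward direction: Lemma \ref{s5l1} relates $D$ and $T$ only after the substitution $\lambda\mapsto -\lambda-\partial$, so matching (\ref{cf1}) and (\ref{cf2}) requires tracking how this substitution interacts with the left-conformal-linearity identity $D_\lambda(\partial a)=-\lambda D_\lambda(a)$ and with the sesquilinearity of the bracket. Getting the signs and the shifted arguments to line up so that exactly the combination in (\ref{cf5}) emerges is where the real work lies; the rest is formal.
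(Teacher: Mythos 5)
Your forward direction and your derivation of (\ref{cf1}) from (\ref{cf2}), (\ref{cf5}) and Lemma \ref{s5l1} match the paper's argument and are correct: applying Lemma \ref{s5l1} to both datums and substituting $\lambda\mapsto-\lambda-\partial$ in (\ref{cf2}) turns (\ref{cf1}) into the single condition $[a_\lambda u_0]=-[{u_0}_{-\lambda-\partial}a]$, exactly as you describe.

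The gap is in your treatment of (\ref{cf4}). You write that (\ref{cf4}) ``defines $Q_0$ in terms of $Q_0'$, $u_0$ and the other data'' and ``can always be satisfied, hence imposes no extra condition.'' But $Q_0(\lambda,\partial)$ and $Q_0'(\lambda,\partial)$ are both \emph{given} as components of the two flag datums being compared; there is no freedom to define either of them. Condition (\ref{cf4}) is a genuine constraint relating two prescribed elements of $R[\lambda]$, and for a general Leibniz conformal algebra it is independent of (\ref{cf1})--(\ref{cf3}) --- that is precisely why the proposition needs the trivial left-center hypothesis a second time. The correct argument, which is the real content of the backward direction, is to invoke the flag-datum axiom (\ref{fg9}) for \emph{both} datums (equations (\ref{cf6}) and (\ref{cf7}) in the paper), substitute (\ref{cf2}) into (\ref{cf6}), and simplify using the Jacobi identity and the fact that $T'_\lambda$ is a conformal derivation. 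This collapses everything to an identity of the form
\begin{equation*}
\bigl[\bigl(P(\lambda,\partial)u_0-\beta^2Q_0'(\lambda,\partial)-[{u_0}_\lambda u_0]-\beta T_\lambda'(u_0)+\beta D_\lambda'(u_0)+Q_0(\lambda,\partial)\bigr)_{\lambda+\mu}\,a\bigr]=0
\quad\text{for all }a\in R,
\end{equation*}
so the element in the inner bracket is left-central, hence zero, which is (\ref{cf4}). Without this step your converse only establishes (\ref{cf1})--(\ref{cf3}) and the proof is incomplete.
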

\begin{proof}
 According to Theorem \ref{t5}, we only need to show that (\ref{cf1})-(\ref{cf4}) are equivalent to (\ref{cf2}), (\ref{cf3}) and (\ref{cf5}).

By Lemma \ref{s5l1}, $D_{-\lambda-\partial}=-T_\lambda$ and $D_{-\lambda-\partial}^{'}=-T_\lambda^{'}$. By (\ref{cf2}),
$T_\lambda(a)=[{u_0}_\lambda a]+\beta T_\lambda^{'}(a)$. Therefore, $D_\lambda(a)=-T_{-\lambda-\partial}(a)=-[{u_0}_{-\lambda-\partial}a]-\beta T_{-\lambda-\partial}^{'}(a)=-[{u_0}_{-\lambda-\partial}a]+\beta D_\lambda^{'}(a)$. By (\ref{cf1}), we can get (\ref{cf5}).

By (\ref{fg9}), we have
\begin{eqnarray}
\label{cf6}P(\lambda,-\lambda-\mu)T_{\lambda+\mu}(a)=T_\lambda(T_\mu(a))-[Q_0(\lambda,\partial)_{\lambda+\mu}a]-T_\mu(T_\lambda(a)),\\
\label{cf7}P^{'}(\lambda,-\lambda-\mu)T_{\lambda+\mu}^{'}(a)=T_\lambda^{'}(T_\mu^{'}(a))-[Q_0^{'}(\lambda,\partial)_{\lambda+\mu}a]-T_\mu^{'}(T_\lambda^{'}(a)).
\end{eqnarray}
Plugging (\ref{cf2}) into (\ref{cf6}), we obtain
\begin{gather}
P(\lambda,-\lambda-\mu)[{u_0}_{\lambda+\mu}a]+\beta^2P^{'}(\lambda,-\lambda-\mu)T_{\lambda+\mu}^{'}(a)
=([{u_0}_\lambda[{u_0}_\mu a]]-[{u_0}_\mu[{u_0}_\lambda a]])\nonumber\\
+\beta(T_\lambda^{'}([{u_0}_\mu a])-[{u_0}_\mu T_\lambda^{'}(a)])
-\beta(T_\mu^{'}([{u_0}_\lambda a])-[{u_0}_\lambda T_\mu^{'}(a)])+\nonumber\\
\label{cf8}\beta^2(T_\lambda^{'}(T_\mu^{'}(a))-T_\mu^{'}(T_\lambda^{'}(a)))-[Q_0(\lambda,\partial)_{\lambda+\mu}a].
\end{gather}
By Jacobi identity, (\ref{cf6}) and the fact that $T_\lambda$ is a conformal derivation, (\ref{cf8}) becomes
\begin{eqnarray*}
[{P(\lambda,\partial)u_0}_{\lambda+\mu} a]&=&\beta^2[Q_0^{'}(\lambda,\partial)_{\lambda+\mu} a]+[[{u_0}_\lambda u_0]_{\lambda+\mu}a]
+\beta[T_\lambda^{'}(u_0)_{\lambda+\mu}a]\\&&-\beta[T_\mu^{'}(u_0)_{\lambda+\mu}a]-[Q_0(\lambda,\partial)_{\lambda+\mu}a].
\end{eqnarray*}
Therefore, $[(P(\lambda,\partial)u_0-\beta^2Q_0^{'}(\lambda,\partial)-[{u_0}_\lambda u_0]-\beta T_\lambda^{'}(u_0)+\beta D_\lambda^{'}(u_0)+Q_0(\lambda,\partial))_{\lambda+\mu} a]=0 $ for any $a\in R$. Since $R$ has the trivial left-center,
we get (\ref{cf4}). The proof is finished.
\end{proof}

The following two corollaries are immediate.

\begin{cor}\label{sec5l2}
Let $R=\mathbb{C}[\partial]V$ be a Lie conformal algebra with the trivial left-center
and $Q=\mathbb{C}[\partial]x$  a Leibniz conformal algebra.
We obtain\begin{itemize}
\item[(1)] If $Q$ is abelian, then $\mathcal{LHC}_R^2(Q,R)\cong
\mathcal{FC}(R)/\equiv$, where $\equiv$ is the equivalent relation on $\mathcal{FC}(R)$ given by: $(D_\lambda, T_\lambda, Q_0(\lambda,\partial), 0)\equiv
(D_\lambda^{'}, T_\lambda^{'}, Q_0^{'}(\lambda,\partial), 0)$ if and only if there exists $\beta\in \mathbb{C}^{\ast}$ such that $T_\lambda -\beta T_{\lambda}^{'}\in CInn(R)$.

\item[(2)] If $Q$ is the Virasoro Lie conformal algebra, then $\mathcal{LHC}_R^2(Q,R)
=\mathcal{LHC}^2(Q,R)\cong \mathcal{FC}(R)/\approx$, where $\approx$ is the equivalent relation on $\mathcal{FC}(R)$ given by: $(D_\lambda, T_\lambda, Q_0(\lambda,\partial), \partial+2\lambda)$ $\equiv
(D_\lambda^{'}, T_\lambda^{'}, Q_0^{'}(\lambda,\partial),$ $ \partial+2\lambda)$ if and only if $T_\lambda -T_{\lambda}^{'}\in CInn(R)$.\end{itemize}
\end{cor}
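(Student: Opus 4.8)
The plan is to read off both statements from Proposition \ref{sec5p1} by feeding in the two extra hypotheses, namely that $R$ is now a \emph{Lie} conformal algebra and that $Q$ carries one of the two possible rank-one bracket structures. Proposition \ref{sec5p1} already reduces the relation $\equiv$ on $\mathcal{FC}(R)$ to the existence of $u_0\in R$ and $\beta\in\mathbb{C}\backslash\{0\}$ satisfying \eqref{cf2}, \eqref{cf3} and \eqref{cf5}; the first step is to observe that \eqref{cf5} becomes vacuous. Indeed, \eqref{cf5} asks that $[a_\lambda u_0]=-[{u_0}_{-\lambda-\partial}a]$ for all $a\in R$, which is precisely the skew-symmetry axiom \eqref{axiom1} evaluated at $b=u_0$; since $R$ is a Lie conformal algebra it holds for every $u_0$ and may be dropped. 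Thus, for $R$ Lie with trivial left-center, $\equiv$ is governed solely by \eqref{cf2} and \eqref{cf3}.

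For part (1) I would use that $Q=\mathbb{C}[\partial]x$ abelian means $\{x_\lambda x\}=0$, so that every flag datum has $P(\lambda,\partial)=0$ and \eqref{cf3} reduces to the trivial identity $0=\beta\cdot 0$. The only surviving constraint is \eqref{cf2}, i.e.\ $T_\lambda(a)=[{u_0}_\lambda a]+\beta T_\lambda'(a)$ for some $u_0$ and some $\beta\in\mathbb{C}^{\ast}$. Since $[{u_0}_\lambda\,\cdot\,]=(\mathrm{ad}_{u_0})_\lambda$ is, by definition, an inner conformal derivation, this is exactly the assertion that $T_\lambda-\beta T_\lambda'\in CInn(R)$ for some $\beta\in\mathbb{C}^{\ast}$. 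Combining this with the identification $\mathcal{LHC}_R^2(Q,R)\cong\mathcal{FC}(R)/\equiv$ coming from Theorem \ref{t5} gives the claimed description.

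For part (2) I would instead substitute the Virasoro bracket $\{x_\lambda x\}=(\partial+2\lambda)x$, so that $P(\lambda,\partial)=\partial+2\lambda$ for every flag datum in play. Now \eqref{cf3} reads $\partial+2\lambda=\beta(\partial+2\lambda)$, which forces $\beta=1$ because $\partial+2\lambda\neq 0$. Hence any equivalence $\equiv$ is automatically an equivalence $\approx$, so the two cohomological objects coincide, $\mathcal{LHC}_R^2(Q,R)=\mathcal{LHC}^2(Q,R)$. Setting $\beta=1$ in \eqref{cf2} turns the surviving condition into $T_\lambda-T_\lambda'=(\mathrm{ad}_{u_0})_\lambda$, i.e.\ $T_\lambda-T_\lambda'\in CInn(R)$, which together with $\mathcal{LHC}^2(Q,R)\cong\mathcal{FC}(R)/\approx$ yields the stated equivalence.

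Since everything is a direct specialization, there is no genuine obstacle; the only points requiring care are the two bookkeeping observations that drive the argument --- that Lie skew-symmetry kills \eqref{cf5}, and that the nonvanishing of $P=\partial+2\lambda$ in the Virasoro case rigidifies $\beta$ to $1$ and thereby merges $\equiv$ with $\approx$. I would double-check that $CInn(R)$ in the statement really means $\{(\mathrm{ad}_{u_0})_\lambda : u_0\in R\}$, so that the passage between \eqref{cf2} and membership in $CInn(R)$ is literally an equality of sets, and that no hidden constraint on $D_\lambda$ or $Q_0(\lambda,\partial)$ survives; the latter is guaranteed because Proposition \ref{sec5p1} already encapsulates \eqref{cf1}--\eqref{cf4} in the three conditions \eqref{cf2}, \eqref{cf3}, \eqref{cf5}.
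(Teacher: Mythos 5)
Your proposal is correct and follows exactly the route the paper intends: the paper states these corollaries are immediate from Proposition \ref{sec5p1}, and your argument simply makes explicit why --- skew-symmetry of the Lie conformal bracket renders \eqref{cf5} vacuous, \eqref{cf2} is literally membership of $T_\lambda-\beta T_\lambda'$ in $CInn(R)$, and \eqref{cf3} is either vacuous ($P=0$, abelian case) or forces $\beta=1$ ($P=\partial+2\lambda$, Virasoro case), merging $\equiv$ with $\approx$. No gaps.
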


%\begin{proof}
%Since $R$ is a Lie conformal algebra, (\ref{cf5})  naturally holds. Then this corollary can be obtained from Proposition \ref{sec5p1}.
%\end{proof}

\begin{cor}
Let $R=\mathbb{C}[\partial]V$ be a Lie conformal algebra with the trivial left-center
and $Q=\mathbb{C}[\partial]x$ a Leibniz conformal algebra which is either abelian or the Virasoro Lie conformal algebra.
If $CDer(R)=CInn(R)$, then $\mathcal{LHC}_R^2(Q,R)=\mathcal{LHC}^2(Q,R)=(0,0,0,0)$.
\end{cor}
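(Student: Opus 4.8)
The plan is to read the two cohomological objects off Corollary~\ref{sec5l2} and to observe that the hypothesis $CDer(R)=CInn(R)$ makes the governing equivalence relation total. In both the abelian and the Virasoro case, Corollary~\ref{sec5l2} identifies $\mathcal{LHC}_R^2(Q,R)$ with a quotient of $\mathcal{FC}(R)$ whose equivalence is controlled solely by the $T$-component: two flag datums are identified exactly when their maps $T_\lambda$ differ by an element of $CInn(R)$ (up to a nonzero scalar $\beta$ in the abelian situation, while $\beta=1$ is forced in the Virasoro situation). By Proposition~\ref{sec5p1} the relation $\approx$ defining $\mathcal{LHC}^2(Q,R)$ is the special case $\beta=1$ of the same condition. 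Hence it suffices to show that every flag datum is equivalent, under either relation, to the trivial one.

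First I would invoke the remark preceding Lemma~\ref{s5l1}: in any flag datum $(D_\lambda,T_\lambda,Q_0(\lambda,\partial),P(\lambda,\partial))\in\mathcal{FC}(R)$ the map $T_\lambda$ is a conformal derivation of $R$, so $T_\lambda\in CDer(R)$. The hypothesis $CDer(R)=CInn(R)$ then gives $T_\lambda\in CInn(R)$ for every flag datum. As base point I would take the trivial datum $(0,0,0,P)$, with $P=0$ when $Q$ is abelian and $P=\partial+2\lambda$ when $Q$ is the Virasoro Lie conformal algebra, and check that it belongs to $\mathcal{FC}(R)$: the identities (\ref{fg9}), (\ref{fg10}) and (\ref{fc1})--(\ref{fc6}) all hold trivially, the only nonvacuous one being the $P$-identity (\ref{fc6}), which is satisfied because $(Q,\{\cdot_\lambda\cdot\})$ is a Leibniz conformal algebra.

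Finally, for an arbitrary flag datum I would apply the criterion of Corollary~\ref{sec5l2} with $u_0=0$, $\beta=1$ and $T_\lambda'=0$: since $T_\lambda-T_\lambda'=T_\lambda\in CInn(R)$, the datum is both $\equiv$- and $\approx$-equivalent to the base point. Thus each quotient collapses to the single class $(0,0,0,0)$, proving $\mathcal{LHC}_R^2(Q,R)=\mathcal{LHC}^2(Q,R)=(0,0,0,0)$; the canonical surjection $\mathcal{LHC}^2(Q,R)\twoheadrightarrow\mathcal{LHC}_R^2(Q,R)$ is then automatically a bijection. I expect no real obstacle: once the observation $T_\lambda\in CDer(R)=CInn(R)$ is in place the conclusion is immediate from Corollary~\ref{sec5l2}, the sole minor point being the routine verification that the trivial datum lies in $\mathcal{FC}(R)$.
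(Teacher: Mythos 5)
Your proposal is correct and follows essentially the same route the paper intends: the paper offers no explicit proof (declaring the corollary ``immediate''), and the intended argument is exactly yours --- the $T_\lambda$-component of any flag datum is a conformal derivation, hence inner by hypothesis, so by Corollary \ref{sec5l2} (together with Proposition \ref{sec5p1} for the $\approx$-relation) every class collapses to the trivial one. Your additional verification that the trivial datum actually lies in $\mathcal{FC}(R)$ is a harmless, slightly more careful touch than the paper's.
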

%\begin{proof}
%It can be directly obtained from Corollary \ref{sec5l2}.
%\end{proof}

Finally, we present an example of computing $\mathcal{LHC}_R^2(Q,R)$ and $\mathcal{LHC}^2(Q,R)$
in which $R$ is a Leibniz conformal algebra with a nontrivial left-center.

\begin{ex}\label{ex1}
Let $R=\mathbb{C}[\partial]L\oplus \mathbb{C}[\partial]W$ be a Leibniz conformal algebra satisfying
\begin{eqnarray}
[L_\lambda L]=(\partial+2\lambda)L,~~~[L_\lambda W]=(\partial+2\lambda)W, ~~[W_\lambda L]=[W_\lambda W]=0.
\end{eqnarray}
This algebra was given in \cite{ZH}. Obviously, $R$ has a nontrivial left-center $\mathbb{C}[\partial]W$. Set
$Q=\mathbb{C}[\partial]x$, which is either abelian or the Virasoro Lie conformal algebra.

To compute $\mathcal{LHC}_R^2(Q,R)$ and $\mathcal{LHC}^2(Q,R)$,  we need to determine conformal derivations and conformal anti-derivations of $R$. It is not hard to show that all conformal derivations and conformal anti-derivations of $R$ are inner. Therefore, by the equivalence of
flag datums, we can assume that $T_\lambda =0$, $D_\lambda(W)=0$ and
\begin{eqnarray}\label{y1}
D_\lambda(L)=[L_\lambda(f(\partial)L+g(\partial)W)]
=f(\lambda+\partial)(\partial+2\lambda)L+g(\lambda+\partial)(\partial+2\lambda)W,
\end{eqnarray}
for some $f(\lambda), g(\lambda)\in \mathbb{C}[\lambda]$. By (\ref{fc2}), we get $[D_\lambda(L)_{\lambda+\mu}L]=0$. Thus, $f(\lambda)=0$ by \eqref{y1}.
Consequently, $D_\lambda(L)=g(\lambda+\partial)(\partial+2\lambda)W$. Set $u_0=g(\partial)W$.
% to make $D_\lambda$ be $0$.
 By (\ref{cf1}) and (\ref{cf2}), we can assume that $D_\lambda=T_\lambda=0$. By (\ref{fc4}), we get $Q_0(\lambda,\partial)=0$. Therefore, we come to the conclusion that $\mathcal{LHC}_R^2(Q,R)=\mathcal{LHC}^2(Q,R)=(0,0,0,0)$ when $Q$ is abelian, and
 $\mathcal{LHC}_R^2(Q,R)=\mathcal{LHC}^2(Q,R)=(0,0,0,\partial+2\lambda)$ when $Q$ is the Virasoro Lie conformal algebra.
\end{ex}

\subsection{Bicrossed products}
Let $\Omega(R, Q)=(\leftharpoonup_\lambda, \rightharpoonup_\lambda, \lhd_\lambda, \rhd_\lambda, $$ f_\lambda, $ $\{\cdot_\lambda \cdot\})$ be an extending datum of a Leibniz conformal algebra $R$ by a $\mathbb{C}[\partial]$-module $Q$ with  $f_\lambda$ trivial. In this case, we denote this extending structure simply by $\Omega(R, Q)=(\leftharpoonup_\lambda, \rightharpoonup_\lambda, \lhd_\lambda, \rhd_\lambda, $ $\{\cdot_\lambda \cdot\})$. By Theorem \ref{t1}, $\Omega(R, Q)=(\leftharpoonup_\lambda, \rightharpoonup_\lambda, \lhd_\lambda, \rhd_\lambda, $ $\{\cdot_\lambda \cdot\})$ is a
Leibniz extending structure if and only if $(Q,\{\cdot_\lambda \cdot\})$ is a Leibniz conformal algebra, $(Q,\rightharpoonup_\lambda,\lhd_\lambda)$ is a module of $R$, $(R,\leftharpoonup_\lambda,\rhd_\lambda)$ is a module of $Q$ and
they satisfy (L2), (L4), (L5), (L10) and (L12). The associated unified product is denoted by
$R\natural_{\leftharpoonup,\rightharpoonup}^{\lhd,\rhd}Q$ and we call it the \emph{bicrossed product}
of $R$ and $Q$. The $\lambda$-product on $R\natural_{\leftharpoonup,\rightharpoonup}^{\lhd,\rhd}Q$ is given by
\begin{eqnarray}
[(a,x)_\lambda (b,y)]=([a_\lambda b]+a\leftharpoonup_\lambda y+x\rhd_\lambda b, a\rightharpoonup_\lambda y+x\lhd_\lambda b+\{x_\lambda y\}),
\end{eqnarray}
for any $a$, $b\in R$ and $x$, $y\in Q$. Note that $R$ and $Q$ are subalgebras of $R\natural_{\leftharpoonup,\rightharpoonup}^{\lhd,\rhd}Q$.

By Theorem \ref{t2}, it is easy to get
\begin{pro}
Let $R$ and $Q$ be two Leibniz conformal algebras. Set
$E=R\oplus Q$ where the direct sum is the sum of $\mathbb{C}[\partial]$-modules. Suppose
that $E$ is  a Leibniz conformal algebra such that $R$ and $Q$ are two subalgebras of $E$. Then $E$ is isomorphic to
a bicrossed product $R\natural_{\leftharpoonup,\rightharpoonup}^{\lhd,\rhd}Q$.
\end{pro}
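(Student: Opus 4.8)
The plan is to derive this as a direct corollary of Theorem \ref{t2}, exactly as the opening phrase ``By Theorem \ref{t2}'' signals. Since $R$ is a subalgebra of $E$, Theorem \ref{t2} already furnishes a Leibniz conformal extending structure $\Omega(R,Q)=(\leftharpoonup_\lambda, \rightharpoonup_\lambda,\lhd_\lambda,\rhd_\lambda, f_\lambda, \{\cdot_\lambda\cdot\})$ together with an isomorphism $E\approx R\natural Q$ of Leibniz conformal algebras. Thus the only thing left to check is that the extra hypothesis ``$Q$ is a subalgebra'' forces the cocycle $f_\lambda$ to be trivial, so that the unified product $R\natural Q$ is in fact a bicrossed product $R\natural_{\leftharpoonup,\rightharpoonup}^{\lhd,\rhd}Q$.

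To this end, recall that in the construction of Theorem \ref{t2} the map $p\colon E\to R$ is the canonical projection of $E=R\oplus Q$ onto $R$, so $p(a)=a$ for $a\in R$ and $p(x)=0$ for $x\in Q$, while the cocycle is defined by $f_\lambda(x,y)=p([x_\lambda y])$ for $x,y\in Q$. First I would invoke the hypothesis that $Q$ is a subalgebra of $E$: this gives $[x_\lambda y]\in Q[\lambda]$ for all $x,y\in Q$. Applying $p$ (extended $\mathbb{C}$-linearly in $\lambda$) and using $p|_Q=0$ then yields $f_\lambda(x,y)=p([x_\lambda y])=0$, so $f_\lambda$ is trivial. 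As a byproduct, the same computation shows $\{x_\lambda y\}=[x_\lambda y]-p([x_\lambda y])=[x_\lambda y]$, i.e. $\{\cdot_\lambda\cdot\}$ restricts to the original $\lambda$-bracket of $Q$, consistent with $(Q,\{\cdot_\lambda\cdot\})$ being the Leibniz conformal algebra $Q$.

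Since $\Omega(R,Q)$ is a Leibniz conformal extending structure by Theorem \ref{t2} and now has $f_\lambda=0$, it is precisely an extending structure of the form $(\leftharpoonup_\lambda, \rightharpoonup_\lambda, \lhd_\lambda, \rhd_\lambda, \{\cdot_\lambda \cdot\})$ defining a bicrossed product, and the $\lambda$-product \eqref{e3} collapses to the bicrossed-product formula with $f_\lambda$ absent. Composing with the isomorphism $E\approx R\natural Q=R\natural_{\leftharpoonup,\rightharpoonup}^{\lhd,\rhd}Q$ from Theorem \ref{t2} finishes the argument. There is no real obstacle here: the whole content sits in the observation that $p$ annihilates $Q$, so that $[x_\lambda y]\in Q[\lambda]$ kills $f_\lambda$. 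Everything else -- the module axioms for $(Q,\rightharpoonup_\lambda,\lhd_\lambda)$ and $(R,\leftharpoonup_\lambda,\rhd_\lambda)$, together with the compatibility conditions (L2), (L4), (L5), (L10) and (L12) -- is automatic from Theorem \ref{t2} and requires no separate verification.
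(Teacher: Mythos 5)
Your proposal is correct and matches the paper's argument, which simply invokes Theorem \ref{t2}; you have filled in the one omitted detail, namely that $Q$ being a subalgebra forces $[x_\lambda y]\in Q[\lambda]$ and hence $f_\lambda(x,y)=p([x_\lambda y])=0$, so the unified product from Theorem \ref{t2} is a bicrossed product.
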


Similarly, bicrossed products %as a special case of unified product
can be used to give an answer to the following problem which is also a special case of $\mathbb{C}[\partial]$-split extending structures problem.\\

{\noindent\bf Problem 2} \emph{Given two Leibniz conformal algebras $R$ and $Q$. Set $E = R\oplus Q $ where
the direct sum is the sum of $\mathbb{C}[\partial]$-modules. Describe and classify all Leibniz conformal
algebra structures on $E$ up to isomorphisms which stabilize $R$ such that $R$ and $Q$ are two subalgebras of $E$.}\\

By the discussions that in Section 3, this problem can be solved by the cohomological type object $\mathcal{LH}_R^2(Q,R)=\mathfrak{TCL}(R,Q)/\equiv$ where in these Leibniz conformal extending structures, $f_\lambda$ is trivial. For convenience, we denote this
cohomological type object by $\mathcal{LHB}_R^2(Q,R)$. In particular, when
$R=\mathbb{C}[\partial]V$ and $Q=\mathbb{C}[\partial]x$, $\mathcal{LHB}_R^2(Q,R)$ can be described
by flag datums of $R$ with $Q_0(\lambda,\partial)=0$.

%Finally, an example of computing $\mathcal{LHB}_R^2(Q,R)$ is given.

\begin{ex}
Let $R$ be the Leibniz conformal algebra given in Example \ref{ex1} and $Q=\mathbb{C}[\partial]x$ the abelian Leibniz conformal algebra. Note that $\mathbb{C}[\partial]L$ is the Virasoro Lie conformal algebra and $(Q,\rightharpoonup_\lambda, \lhd_\lambda)$ is a module of $R$.
Obviously, $(Q,\rightharpoonup_\lambda)$ is a module of the subalgebra $\mathbb{C}[\partial]L$ of $R$.
According to the representation theory of the Virasoro Lie conformal algebra, $h_\lambda(L,\partial)=0$ or
$h_\lambda(L,\partial)=\partial+\alpha\lambda+\beta$ for some $\alpha$, $\beta\in \mathbb{C}$.

Suppose that $h_\lambda(L,\partial)=0$. Setting $a=L$ and $b=W$ in (\ref{fg1}), we can directly get $h_\lambda(W,\partial)=0$. Then a bicrossed product of $R$ and $Q$ is also a crossed product. By Example \ref{ex1},  all flag datums of $R$ are equivalent to $(0,0,0,0,0)$.

%Therefore, we only need to consider the case when
Now consider the case $h_\lambda(L,\partial)=\partial+\alpha\lambda+\beta$. Letting
$a=W$ and $b=L$ in (\ref{fg1}), one can get $h_\lambda(W,\partial)=0$.
In the following we compute the flag datums of $R$.

 First, we compute $\mathcal{F}_1(R)$. By Example \ref{ex1}, all conformal derivations of $R$ are inner. Therefore, by the equivalence of flag datums, we can set $T_\lambda=0$. Then by (\ref{fg3}), one can assume that
$D_\lambda(L)=g_1(\lambda,\partial)W$ and $D_\lambda(W)=g_2(\lambda,\partial)W$ for some $g_1(\lambda,\partial)$ and $g_2(\lambda,\partial)\in \mathbb{C}[\lambda,\partial]$.
Note that $P(\lambda,\partial)=0$ and $Q_0(\lambda,\partial)=0$. Then it can be directly obtained from (\ref{fg6}) that $D_\lambda(W)=0$. Setting $a=b=L$ in (\ref{fg2}), one can get
\begin{gather}
\label{eb1}(\lambda-\mu)g_1(\lambda+\mu,\partial)=g_1(\mu,\lambda+\partial)(\partial+2\lambda)
+(\partial+\lambda+\alpha\mu+\beta)g_1(\lambda,\partial)\\
-g_1(\lambda,\mu+\partial)(\partial+2\mu)
-(\partial+\mu+\alpha\lambda+\beta)g_1(\mu,\partial).\nonumber
\end{gather}
Letting $\lambda=0$ in (\ref{eb1}) gives
\begin{eqnarray}
\beta g_1(\mu,\partial)=(\partial+\alpha\mu+\beta)g_1(0,\partial)-(\partial+2\mu)g_1(0,\mu+\partial)
\end{eqnarray}

{\rm (A1)} If $\beta\neq 0$, then
$g_1(\mu,\partial)=(\partial+\alpha\mu+\beta)g_1(\partial)-(\partial+2\mu)g_1(\mu+\partial)$ with $g_1(\partial)= \frac{g_1(0,\partial)}{\beta}\in \mathbb{C}[\partial]$. %Taking it into (\ref{eb1}), (\ref{eb1}) naturally holds.
By the discussions above, $\mathcal{F}_1(R)$ can be seen the set of  flag datums such as
$(h_\lambda^{\alpha,\beta}(\cdot,\partial), D_\lambda^{g_1},0, 0, 0)$ where $h_\lambda^{\alpha,\beta}(L,\partial)=\partial+\alpha\lambda+\beta$ and $h_\lambda^{\alpha,\beta}(W,\partial)=0$, $D_\lambda^{g_1}(L)=g_1(\lambda,\partial)W$ and $D_\lambda^{g_1}(W)=0$, where
$g_1(\lambda,\partial)$ is a solution of (\ref{eb1}).
 %If $\beta\neq 0$, $g_1(\mu,\partial)=(\partial+\alpha\mu+\beta)g_1(\partial)-(\partial+2\mu)g_1(\mu+\partial)$. Therefore,
 By Theorem \ref{t5},
$(h_\lambda^{\alpha,\beta}(\cdot,\partial), D_\lambda^{g_1},0, 0, 0)$ is equivalent to $(h_\lambda^{\alpha,\beta}(\cdot,\partial), 0,0, 0, 0)$ by setting $u_0=-g_1(\partial)W$ in \eqref{tt1}.

{\rm (A2)} If $\beta= 0$, $(h_\lambda^{\alpha,0}(\cdot,\partial),  D_\lambda^{g_1},0, 0, 0)$ is equivalent to $(h_\lambda^{\alpha^{'},0}(\cdot,\partial), D_\lambda^{g_1^{'}},0, 0, 0)$ if and only if
$\alpha=\alpha^{'}$ and there exist $\gamma\in \mathbb{C}$ and $g(\partial)\in \mathbb{C}[\partial]$ such that
\begin{eqnarray}
g_1(\lambda,\partial)=(\partial+2\lambda)g(\lambda+\partial)-(\partial+\alpha\lambda)g(\partial)+\gamma g_1^{'}(\lambda,\partial).
\end{eqnarray}

Therefore, by the discussion above, $\mathcal{F}_1(R)$ contains the three kinds of flag datums including $(0,0,0,0,0)$. Moreover, by Theorem \ref{t5}, the three kinds of flag datums are not equivalent
to each other.

Next, we compute $\mathcal{F}_2(R)$.

Set $D_\lambda(L)=f_1(\lambda,\partial)L+g_1(\lambda,\partial)W$
and $D_\lambda(W)=f_2(\lambda,\partial)L+g_2(\lambda,\partial)W$. By (\ref{fg12}), $T_\mu(a)+D_{-\mu-\partial}(a)\in \mathbb{C}[\partial,\mu]W$. Therefore, we can assume that $T_\lambda(L)=-f_1(-\lambda-\partial,\partial)L+g_3(\lambda,\partial)W$
and $T_\lambda(W)=-f_2(-\lambda-\partial,\partial)L+g_4(\lambda,\partial)W$.
 By (\ref{fg13}), we get
\begin{eqnarray}
\label{eb2}f_1(\lambda,\mu+\partial)((1-\alpha)\partial-\alpha\mu+\beta)=f_1(\lambda,-\lambda-\mu)(\partial+\alpha(\lambda+\mu)+\beta).
\end{eqnarray}
Therefore,
\begin{eqnarray}
f_{1}(\lambda, \partial)=\left\{
\begin{array}{ll}
0, \\
f_1(\lambda)(\partial+\lambda+\beta), & \alpha=1,\\
f_1(\lambda), & \alpha= 0,
\end{array}
\right.
\end{eqnarray}
where $f_1(\lambda)\neq 0$.

%by (\ref{eb2}), we can get that when $\alpha=1$, $f_1(\lambda,\partial)=f_1(\lambda)(\partial+\lambda+\beta)$ for some $f_1(\lambda)\neq 0$; when $\alpha= 0$, $f_1(\lambda,\partial)=f_1(\lambda)$ for some $f_1(\lambda)\neq 0$; $f_1(\lambda,\partial)=0$.

Suppose that $f_1(\lambda,\partial)\neq 0$. Therefore,
\begin{eqnarray}
D_\lambda(L)=\left\{
\begin{array}{ll}
f_1(\lambda)(\partial+\lambda+\beta)L+g_1(\lambda,\partial)W, & \alpha=1,\\
f_1(\lambda)L+g_1(\lambda,\partial)W, & \alpha= 0,
\end{array}
\right.
\end{eqnarray}
and
%when $\alpha=1$, $D_\lambda(L)=f_1(\lambda)(\partial+\lambda+\beta)L+g_1(\lambda,\partial)W$; when $\alpha=0$, $D_\lambda(L)=f_1(\lambda)L+g_1(\lambda,\partial)W$.
\begin{eqnarray}
T_\lambda(L)=\left\{
\begin{array}{ll}
-f_1(-\lambda-\partial)(-\lambda+\beta)L+g_3(\lambda,\partial)W, & \alpha=1,\\
-f_1(-\lambda-\partial)L+g_3(\lambda,\partial)W, & \alpha= 0.
\end{array}
\right.
\end{eqnarray}
%when $\alpha=1$, $T_\lambda(L)=-f_1(-\lambda-\partial)(-\lambda+\beta)L+g_3(\lambda,\partial)W$; when $\alpha=0$, $T_\lambda(L)=-f_1(-\lambda-\partial)L+g_3(\lambda,\partial)W$.
%Taking them into (\ref{fg17}), we can obtain that
However, when $\alpha=1$, (\ref{fg17}) does not hold for $a=L$. By (\ref{fg17}) again, $f_1(\lambda)=f_1$ for some
$f_1\in \mathbb{C}\setminus\{0\}$ when $\alpha=0$. But (\ref{fg1}) does not hold in this case.
%when $D_\lambda(L)=f_1L+g_1(\lambda,\partial)W$ for some
%$f_1\in \mathbb{C}\setminus\{0\}$.

Therefore, %by the discussion above,
$D_\lambda(L)=g_1(\lambda,\partial)W$, $T_\lambda(L)=g_3(\lambda,\partial)W$. With a similar discussion, one can get $D_\lambda(W)=g_2(\lambda,\partial)W$ and $T_\lambda(W)=g_4(\lambda,\partial)W$.

Setting $a=W$ and $b=L$ in (\ref{fg2}), one can immediately obtain that $D_\lambda(W)=0$.
By (\ref{fg16}),
$T_\lambda(T_\mu(W))=T_\mu(T_\lambda(W))$. Therefore, $g_4(\mu,\lambda+\partial)g_4(\lambda,\partial)
=g_4(\lambda,\mu+\partial)g_4(\mu,\partial)$. Consequently, we can assume that $g_4(\mu,\partial)=g_4(\mu)$ for some $g_4(\mu)\in \mathbb{C}[\partial]$.
Setting $a=L$ and $b=W$ in (\ref{fg11}), we can obtain
\begin{eqnarray}
(\partial+2\lambda+\mu)g_4(\mu,\partial)=g_4(\mu,\lambda+\partial)(\partial+2\lambda)-((\alpha-1)\lambda-\mu+\beta)g_4(\lambda+\mu,\partial).
\end{eqnarray}
It follows that $\mu g_4(\mu)=((1-\alpha)\lambda+\mu-\beta)g_4(\lambda+\mu)$. Therefore, $g_4(\mu)=0$ or
$g_4(\mu)=k$ for some $k\in \mathbb{C}\setminus\{0\}$ when $\alpha=1$ and $\beta=0$, i.e. $T_\lambda(W)=0$ or $T_\lambda(W)=k_1W$ for some $k_1\in \mathbb{C}\setminus\{0\}$ when $\alpha=1$ and $\beta=0$.

{\rm (B1)} Suppose that $T_\lambda(W)=k_1W$ for some $k_1\in \mathbb{C}\setminus\{0\}$ with $\alpha=1$ and $\beta=0$. By (\ref{fg6}), we can get $D_\lambda(L)=0$. According to (\ref{fg16}) with $a=L$, one can obtain
$g_3(\mu,\lambda+\partial)=g_3(\lambda,\mu+\partial)$. Therefore, we can assume that $g_3(\lambda,\partial)
=g_3(\lambda+\partial)$ for some $g_3(\lambda)\in \mathbb{C}[\lambda]$.
Taking it into (\ref{fg11}) with $a=b=L$, we have $g_3(\mu+\partial)=g_3(\lambda+\mu+\partial)$, which gives
$g_3(\lambda)=k_2$ for some $k_2\in \mathbb{C}$. %By the discussion above,

In this case, all flag datums of the second kind of $R$ are of the form $(h_\lambda^{1,0}(\cdot,\partial), 0, T_\lambda^{k_1, k_2}, 0, 0)$, where $h_\lambda^{1,0}(L,\partial)=\partial+\lambda$, $h_\lambda^{1,0}(W,\partial)=0$,
$T_\lambda^{k_1, k_2}(L)=k_2W$ and $T_\lambda^{k_1, k_2}(W)=k_1W$ with $k_1\in \mathbb{C}\setminus\{0\}$ and $k_2\in \mathbb{C}$. By Theorem \ref{t5},  $(h_\lambda^{1,0}(\cdot,\partial), 0, T_\lambda^{k_1, k_2}, 0, 0)\equiv  (h_\lambda^{1,0}(\cdot,\partial), 0, T_\lambda^{k_1^{'}, k_2^{'}}, 0, 0)$ if and only if
there exists some $c\in \mathbb{C}\setminus\{0\}$, such that $k_1=ck_1^{'}$, $k_2=ck_2^{'}$.

{\rm (B2)} Suppose that $T_\lambda(W)=0$. Setting $a=b=L$ in (\ref{fg11}), we have
\begin{gather}
\label{eb3}(\partial+\mu+2\lambda)g_3(\mu,\partial)=g_3(\mu,\lambda+\partial)(\partial+2\lambda)
-((1-\alpha)\partial-\alpha\mu+(1-\alpha)\lambda+\beta)g_1(\lambda,\partial)\\
-((\alpha-1)\lambda-\mu+\beta)g_3(\lambda+\mu,\partial).\nonumber
\end{gather}
Letting $\lambda=0$ in the above equality, one can get
\begin{eqnarray}
\beta g_3(\mu,\partial)=-((1-\alpha)\partial-\alpha\mu+\beta)g_1(0,\partial).\end{eqnarray}

If $\beta\neq 0$, %$g_3(\mu,\partial)=-((1-\alpha)\partial-\alpha\mu+\beta)\frac{g_1(0,\partial)}{\beta}$. %Set $g_1(\partial)=\frac{g_1(0,\partial)}{\beta}$. We get
$g_3(\mu,\partial)=-((1-\alpha)\partial-\alpha\mu+\beta)g_1(\partial)$, with $g_1(\partial)=\frac{g_1(0,\partial)}{\beta}$.
%Since $g_1(\lambda,\partial)$ satisfies (\ref{eb1}), by the above discussion, we can get the following result.
In this case, $\mathcal{F}_2(R)$ is the set of  flag datums of the form
$(h_\lambda^{\alpha,\beta}(\cdot,\partial), D_\lambda^{g_1}, T_\lambda^{g_3}, 0, 0)$ where $h_\lambda^{\alpha,\beta}(L,\partial)=\partial+\alpha\lambda+\beta$ and $h_\lambda^{\alpha,\beta}(W,\partial)=0$, $D_\lambda^{g_1}(L)=g_1(\lambda,\partial)W$,  $D_\lambda^{g_1}(W)=0$,  $T_\lambda^{g_3}(L)=g_3(\lambda,\partial)W$ and $T_\lambda^{g_3}(W)=0$, where
$g_1(\lambda,\partial)$ is a solution of (\ref{eb1}) and $g_3(\lambda,\partial)$ is a solution of (\ref{eb3}). When $\beta\neq 0$, by Theorem \ref{t5},
$(h_\lambda^{\alpha,\beta}(\cdot,\partial), D_\lambda^{g_1}, T_\lambda^{g_3}, 0, 0)$ is equivalent to $(h_\lambda^{\alpha,\beta}(\cdot,\partial), 0,0, 0, 0)$ by setting $u_0=-g_1(\partial)W$ in Theorem \ref{t5}.

When $\beta= 0$, $(h_\lambda^{\alpha,0}(\cdot,\partial), D_\lambda^{g_1}, T_\lambda^{g_3}, 0, 0)$ is equivalent to $(h_\lambda^{\alpha^{'},0}(\cdot,\partial), D_\lambda^{g_1^{'}},T_\lambda^{g_3^{'}}, 0, 0)$ if and only if
$\alpha=\alpha^{'}$ and there exist $\gamma\in \mathbb{C}$ and $g(\partial)\in \mathbb{C}[\partial]$ such that
\begin{eqnarray}
g_1(\lambda,\partial)=(\partial+2\lambda)g(\lambda+\partial)-(\partial+\alpha\lambda)g(\partial)+\gamma g_1^{'}(\lambda,\partial),\\
g_3(\lambda,\partial)=\gamma g_3^{'}(\lambda,\partial)+((1-\alpha)\partial-\alpha\lambda)g(\partial).
\end{eqnarray}

According to the discussion above, $\mathcal{F}_2(R)$ contains the three kinds of flag datums, and also they are not equivalent to each other.

Therefore, $\mathcal{LHB}_R^2(Q,R)$ can be described by the six kinds of flag
datums above under the corresponding equivalent relations.
\end{ex}

{\bf Acknowledgments}
{This work was supported by the Zhejiang Provincial Natural Science Foundation of China (No. LQ16A010011), the National Natural Science Foundation of China (No. 11501515, 11871421).}

\end{document}